\documentclass[11pt]{amsart}
\usepackage{amssymb, amsthm, amsmath, gensymb, dsfont, cite}
\usepackage{graphicx, comment}
\usepackage[small]{caption}
\usepackage{subcaption}
\usepackage{epsfig}
\usepackage{tikz, pgfplots, float}
\usepackage{setspace}
\usepackage{amsfonts}

\usepackage[utf8]{inputenc}
\usepackage{fullpage}
\usepackage{framed}

\usepackage{enumerate}
\usepackage{url}
\usepackage[breaklinks]{hyperref}
\usepackage{cleveref}
\hypersetup{
	colorlinks = true, % colors links instead of ugly boxes
	urlcolor = cyan, % color for external hyperlinks
	linkcolor = teal, % color of internal links
	citecolor = cyan % color of citations
}

\newtheorem{theorem}{Theorem}

\newtheorem{lemma}[theorem]{Lemma}

\crefname{claim}{claim}{claims}
\newtheorem{observation}[theorem]{Observation}
\newtheorem{proposition}[theorem]{Proposition}
\newtheorem{corollary}[theorem]{Corollary}

\title{On the Ramsey classes of random hypergraphs}

\author{Dingyuan Liu}

\address{Karlsruhe Institute of Technology, Englerstraße 2, D-76131 Karlsruhe, Germany}

\email{liu@mathe.berlin}

\begin{document}
\maketitle

\vspace{-1.7em}
\begin{abstract}
Let $r,s,t\geq2$ be integers. For $r$-graphs $G$ and $F_1,\dots,F_s$, we write $G\to(F_1,\dots,F_s)$ if every $s$-edge-coloring of $G$ yields a monochromatic copy of $F_i$ in the $i$-th color for some $1\leq i\leq s$. Let $\mathcal{R}(F_1,\dots,F_s)$ denote the family of all $r$-graphs $G$ with $G\to(F_1,\dots,F_s)$. When $F_1=\dots=F_s=F$, we write $\mathcal{R}(F;s)=\mathcal{R}(F_1,\dots,F_s)$.

In this paper, we investigate when $\mathcal{R}(H;s)\subseteq\mathcal{R}(Q_1,\dots,Q_t)$ holds, where $H=H^{(r)}(n,p)$ is a random $r$-graph and $Q_1,\dots,Q_t$ are fixed $r$-graphs. Our main result determines the threshold for a large class of such $Q_1,\dots,Q_t$, including complete $r$-graphs. The key ingredient in our proof is a generalization of a result of Graham, {\L}uczak, R\"odl, and Ruci\'nski, which provides a necessary and sufficient condition for $\mathcal{R}(F_1,\dots,F_s)\subseteq\mathcal{R}(Q_1,\dots,Q_t)$, where $Q_1,\dots,Q_t$ are highly connected. As a byproduct, we characterize when two tuples of highly connected $r$-graphs are Ramsey equivalent.
\end{abstract}

\section{Introduction}
\label{introduction}
\subsection{Background}
Ramsey theory is a central area of mathematics concerned with the unavoidable emergence of order in large discrete structures, typically modeled by hypergraphs. Let $r\geq2$ be an integer. An $r$-uniform hypergraph (abbreviated as an $r$-graph) $F$ is an ordered pair $(V(F),E(F))$, where $V(F)$ is the vertex set and $E(F)\subseteq\binom{V(F)}{r}$ is the edge set. When $r=2$, an $r$-graph is simply referred to as a graph. Given $r$-graphs $G$ and $F_1,\dots,F_s$, we say that $G$ is Ramsey for $(F_1,\dots,F_s)$, denoted by $G\to(F_1,\dots,F_s)$, if every $s$-edge-coloring of $G$ yields a monochromatic copy of $F_i$ in the $i$-th color for some $1\leq i\leq s$. The Ramsey number $R(F_1,\dots,F_s)$ is defined as the minimum number of vertices in an $r$-graph $G$ satisfying $G\to(F_1,\dots,F_s)$. The existence of such numbers was first established by Ramsey~\cite{ramsey1930}, and have since given rise to a rich body of research. For further background and recent developments concerning graph and hypergraph Ramsey numbers, we refer the reader to the surveys of Morris~\cite{morris2026} and Mubayi and Suk~\cite{mubayi2020}.

A prominent line of research in Ramsey theory focuses on Ramsey properties of random graphs and hypergraphs. Denote by $v(F)$ and $e(F)$ the numbers of vertices and edges of $F$, respectively. The maximum $r$-density of an $r$-graph $F$ is defined as
\[m_r(F)=\begin{cases}
0 & \text{if $e(F)=0$,}\\
1/r & \text{if $e(F)=1$,}\\
\max_{J\subseteq F,v(J)>r}\frac{e(J)-1}{v(J)-r} & \text{if $e(F)\geq2$.}
\end{cases}\]
For $n\in\mathbb{N}$ and $0\leq p\leq 1$, let $G(n,p)$ be a random graph on the vertex set $[n]$, where each pair of vertices appears as an edge independently with probability $p$. R\"odl and Ruci\'nski~\cite{rodl1995} showed that for any integer $s\geq2$ and any graph $F$ containing a cycle, there are constants $C,D>0$ such that
\begin{equation}
\label{random_Ramsey}
\lim_{n\to\infty}\mathbb{P}\big(G(n,p)\to(\underbrace{F,\dots,F}_{\text{$s$ times}})\big)=\begin{cases}
1 & \text{if $p\geq Cn^{-1/m_2(F)}$,} \quad\quad\quad\text{($1$-statement)}\\
0 & \text{if $p\leq Dn^{-1/m_2(F)}$.} \quad\quad\quad\text{($0$-statement)}
\end{cases}
\end{equation}
See also~\cite{nenadov2016} for a short proof of the above result.

The $1$-statement in~\eqref{random_Ramsey} was later extended to hypergraphs by Friedgut, R\"odl, and Schacht~\cite{friedgut2010}, and independently by Conlon and Gowers~\cite{conlon2016}. Let $H^{(r)}(n,p)$ be a random $r$-graph on the vertex set $[n]$, where every $r$-subset of $[n]$ appears as an edge independently with probability $p$. The above groups of authors showed that for any $s\geq2$ and any $r$-graph $F$ containing at least one edge, there is a constant $C>0$ such that
\[\lim_{n\to\infty}\mathbb{P}\big(H^{(r)}(n,p)\to(\underbrace{F,\dots,F}_{\text{$s$ times}})\big)=1\quad\text{if $p\geq Cn^{-1/m_r(F)}$}.\]
It was noted in~\cite{conlon2016,friedgut2010,rodl1998} that the corresponding $0$-statement should also hold for ``most'' $r$-graphs $F$, but this problem remains open except for several special cases (see~\cite{gugelmann2017,nenadov2017,thomas2013} and the references therein). Notably, Nenadov, Person,~\v{S}kori\'c, and Steger~\cite{nenadov2017} established the $0$-statement for $F$ being complete $r$-graphs.

There is also a substantial body of work on the asymmetric Ramsey properties of random graphs and hypergraphs~\cite{bowtell2023,christoph2025,gugelmann2017,kohayakawa1997,kohayakawa2014,kuperwasser2025,liebenau2023,marciniszyn2009,mousset2020}. Naturally, the parameter governing the threshold is an asymmetric analogue of the maximum $r$-density. Given two $r$-graphs $T$ and $F$ with $m_r(T)\geq m_r(F)>0$, the asymmetric maximum $r$-density of the ordered pair $(T,F)$ is defined as
\[m_r(T,F)=\max_{J\subseteq T,v(J)\geq r}\frac{e(J)}{v(J)-r+m_r(F)^{-1}}.\]
It is known (see, e.g.,~\cite[Proposition~3.1]{bowtell2023}) that $m_r(T)\geq m_r(T,F)\geq m_r(F)$, and if $m_r(T)>m_r(F)$ then $m_r(T)>m_r(T,F)>m_r(F)$. In particular, when $T=F$, we have $m_r(T,F)=m_r(F)$.

Resolving the well-known Kohayakawa--Kreuter conjecture~\cite{kohayakawa1997}, Christoph, Martinsson, Steiner, and Wigderson~\cite{christoph2025}, building on earlier work~\cite{bowtell2023,kuperwasser2025,mousset2020}, proved that for any fixed graphs $F_1,\dots,F_s$ satisfying $m_2(F_1)\geq\dots\geq m_2(F_s)$ and $m_2(F_2)>1$, there are constants $C,D>0$ such that
\begin{equation}
\label{asymmetric_random_Ramsey}
\lim_{n\to\infty}\mathbb{P}\big(G(n,p)\to(F_1,\dots,F_s)\big)=\begin{cases}
1 & \text{if $p\geq Cn^{-1/m_2(F_1,F_2)}$,}\\
0 & \text{if $p\leq Dn^{-1/m_2(F_1,F_2)}$.}
\end{cases}
\end{equation}
For random hypergraphs, a $1$-statement analogous to that in~\eqref{asymmetric_random_Ramsey} was proved by Mousset, Nenadov and Samotij~\cite{mousset2020}, while the corresponding $0$-statement was recently established by Bowtell, Hancock, and Hyde~\cite{bowtell2023} for $r$-graphs under an additional condition. We shall present their results in~\Cref{preliminaries}.

\subsection{Our results}
In this paper, we investigate the Ramsey classes of random hypergraphs. This problem differs from, but is closely related to, those discussed above. Given $r$-graphs $F_1,\dots,F_s$, the Ramsey class of $(F_1,\dots,F_s)$, denoted by $\mathcal{R}(F_1,\dots,F_s)$, is the family of all $r$-graphs $G$ with $G\to(F_1,\dots,F_s)$. When $F_1=\dots=F_s=F$, we simply write $\mathcal{R}(F;s)=\mathcal{R}(F_1,\dots,F_s)$.

Let $H=H^{(r)}(n,p)$ be a random $r$-graph. It is immediate that for any integers $s,t\geq2$ and fixed $r$-graphs $Q_1,\dots,Q_t$, we have $\mathcal{R}(Q_1,\dots,Q_t)\not\subseteq\mathcal{R}(H;s)$ whenever $n>R(Q_1,\dots,Q_t)$. Indeed, by the definition of $R(Q_1,\dots,Q_t)$, there exists an $r$-graph $G$ with $v(G)=R(Q_1,\dots,Q_t)$ such that $G\in\mathcal{R}(Q_1,\dots,Q_t)$. On the other hand, since $v(G)<v(H)$ we have $G\not\in\mathcal{R}(H;s)$.

Therefore, the interesting and non-trivial question is under what conditions the reverse inclusion $\mathcal{R}(H;s)\subseteq\mathcal{R}(Q_1,\dots,Q_t)$ holds. Let $m(F)=\max_{J\subseteq F}e(J)/v(J)$ denote the maximum density of an $r$-graph $F$. A generalization of a result of Bollob\'as~\cite{bollobas1981} (see also a proof in~\cite{thomas2013}) asserts that
\begin{equation}
\label{small_subgraphs}
\lim_{n\to\infty}\mathbb{P}\big(\text{$H^{(r)}(n,p)$ contains a copy of $F$}\big)=\begin{cases}
        1 & \text{if $p\gg n^{-1/m(F)}$},\\
        0 & \text{if $p\ll n^{-1/m(F)}$}.
    \end{cases}
\end{equation}
Using~\eqref{small_subgraphs} we have the following observation for the case $s\geq t$.

\begin{observation}
\label{slarger}
Let $r,s,t\geq2$ be integers with $s\geq t$. Let $Q_1,\dots,Q_t$ be $r$-graphs and suppose that $m(Q_1)\geq\dots\geq m(Q_t)$. Let $H=H^{(r)}(n,p)$. Then
\[\lim_{n\to\infty}\mathbb{P}\big(\mathcal{R}(H;s)\subseteq\mathcal{R}(Q_1,\dots,Q_t)\big)=\begin{cases}
        1 & \text{if $p\gg n^{-1/m(Q_1)}$},\\
        0 & \text{if $p\ll n^{-1/m(Q_1)}$ and $Q_1$ is Ramsey-avoidable}.
    \end{cases}\]
Here, $Q_1$ is called Ramsey-avoidable if, for every $Q_1$-free $r$-graph $F$, there exists a $Q_1$-free $r$-graph in $\mathcal{R}(F;2)$.
\end{observation}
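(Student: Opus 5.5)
For the $1$-statement, assume $p\gg n^{-1/m(Q_1)}$. Since $m(Q_1)\geq m(Q_i)$ for every $i$, we have $p\gg n^{-1/m(Q_i)}$ for each $1\le i\le t$. By~\eqref{small_subgraphs} and a union bound over the finitely many $i$, with high probability $H$ contains a copy of every $Q_i$ simultaneously. Fix such an outcome $H$ and take any $G\in\mathcal{R}(H;s)$.

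We show $G\to(Q_1,\dots,Q_t)$. Let $\varphi$ be any $t$-edge-coloring of $G$. Because $s\ge t$, we may regard $\varphi$ as an $s$-coloring that never uses colors $t+1,\dots,s$. Since $G\to(H;s)$, there is a monochromatic copy of $H$ in some color $j$. Colors above $t$ are unused and $e(H)>0$ with high probability, so $j\le t$. This copy of $H$ contains a copy of $Q_j$, which is then monochromatic in color $j$. Hence $G\in\mathcal{R}(Q_1,\dots,Q_t)$. This part is routine.

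For the $0$-statement, assume $p\ll n^{-1/m(Q_1)}$ and $Q_1$ Ramsey-avoidable. By~\eqref{small_subgraphs}, with high probability $H$ is $Q_1$-free. We need some $G\in\mathcal{R}(H;s)$ with $G\notin\mathcal{R}(Q_1,\dots,Q_t)$. The natural target is a $Q_1$-free $G$: colouring all of $E(G)$ with color $1$ then gives no copy of $Q_1$ in color $1$ and no edges in other colors, so $G\not\to(Q_1,\dots,Q_t)$. (Under the convention $e(Q_i)\ge1$, empty color classes contain no $Q_i$. If some $Q_i$ had no edges the statement would degenerate, so I would assume all $Q_i$ have edges, as is implicit.)

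The main step is to upgrade Ramsey-avoidability from $2$ colors to $s$ colors. I would prove by induction on $s$ that for every $Q_1$-free $F$ there is a $Q_1$-free $G_s\in\mathcal{R}(F;s)$. The case $s=2$ is the hypothesis. For the inductive step, let $G_{s-1}$ be $Q_1$-free with $G_{s-1}\to(F;s-1)$, and let $G_s$ be $Q_1$-free with $G_s\to(G_{s-1};2)$, obtained by applying avoidability to $F'=G_{s-1}$. Given an $s$-coloring of $G_s$, merge colors $2,\dots,s$ into a single color. Either there is a copy of $G_{s-1}$ in color $1$, which contains a copy of $F$ in color $1$, or there is a copy of $G_{s-1}$ using only colors $2,\dots,s$. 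In the latter case, $G_{s-1}\to(F;s-1)$ yields a monochromatic copy of $F$. So $G_s\to(F;s)$.

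Applying this with $F=H$ gives the required $G$, completing the $0$-statement. I expect this color-boosting induction to be the only nontrivial point, and it is short.
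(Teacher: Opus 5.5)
Your proposal is correct and follows the paper's argument. The $1$-statement is the paper's reasoning $\mathcal{R}(H;s)\subseteq\mathcal{R}(H;t)\subseteq\mathcal{R}(Q_1,\dots,Q_t)$ once $H$ contains every $Q_j$, and the $0$-statement exhibits a $Q_1$-free member of $\mathcal{R}(H;s)$. The paper simply cites Ramsey-avoidability there and leaves the passage from $2$ to $s$ colours implicit; your induction, which iterates avoidability and merges colours $2,\dots,s$, correctly supplies that step.
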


Indeed, since $s\geq t$, $\mathcal{R}(H;s)\subseteq\mathcal{R}(H;t)$. So for the $1$-statement in~\Cref{slarger}, it suffices to show that $\mathcal{R}(H;t)\subseteq\mathcal{R}(Q_1,\dots,Q_t)$, which holds if $H$ contains a copy of $Q_j$ for every $j\in[t]$. As $m(Q_1)\geq\dots\geq m(Q_t)$ and $p\gg n^{-1/m(Q_1)}$, by~\eqref{small_subgraphs}, with high probability $H$ contains a copy of $Q_j$ for each $j\in[t]$. Consequently, $\mathcal{R}(H;s)\subseteq\mathcal{R}(Q_1,\dots,Q_t)$ holds with high probability. The $0$-statement follows since $H$ is with high probability $Q_1$-free and $Q_1$ is Ramsey-avoidable.

It is known that all complete bipartite graphs except $K_{2,2}$~\cite{nesetril1992}, and all complete $r$-graphs~\cite{nesetril1979} are Ramsey-avoidable. Moreover, in~\cite{nesetril1977,nesetril1989} Ne\v{s}et\v{r}il and R\"odl introduced two large families $\mathcal{X}_r$ and $\mathcal{Y}_r$ of Ramsey-avoidable $r$-graphs, which we will describe later.

We now turn to the case $s<t$, where the situation becomes more delicate. As in other Ramsey problems on random hypergraphs mentioned earlier, the $1$-statement is typically more tractable. Therefore, we begin by establishing a general $1$-statement in this regime.

\begin{proposition}
\label{our_1_statement}
Let $r,s,t\geq2$ be integers with $s<t$. Let $Q_1,\dots,Q_t$ be $r$-graphs such that $m_r(Q_1)\geq\dots\geq m_r(Q_t)>0$ and $m_r(Q_{s+1})>1$. Let $\delta=\max\{\mu,\sigma\}$, where $\mu=\max\{m(Q_1),\dots,m(Q_t)\}$ and $\sigma=\min\{m_r(Q_i,Q_{s+1}):i\in[s]\}$. Then
\[\lim_{n\to\infty}\mathbb{P}\big(\mathcal{R}(H^{(r)}(n,p);s)\subseteq\mathcal{R}(Q_1,\dots,Q_t)\big)=1\quad\text{if $p\gg n^{-1/\delta}$}.\]
\end{proposition}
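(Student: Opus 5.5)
The plan is to reduce the statement to two known facts about $H=H^{(r)}(n,p)$ by merging colours. Given an $r$-graph $G\in\mathcal{R}(H;s)$ and an arbitrary $t$-edge-colouring of $G$, I will collapse the $t$ colours into $s$ classes, apply $G\to(H;s)$ to get a monochromatic copy of $H$ in one class, and then show that this copy of $H$ already contains the required monochromatic $Q_j$.

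Since $\sigma$ is a minimum over $[s]$, first fix an index $i\in[s]$ with $\sigma=m_r(Q_i,Q_{s+1})$. Define the $s$-colouring of $G$ as follows. Each original colour $j\in[s]\setminus\{i\}$ forms its own class. The remaining colours $\{i\}\cup\{s+1,\dots,t\}$ are merged into one class. There are exactly $(s-1)+1=s$ classes.

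Because $G\to(H;s)$, some class contains a copy $H'$ of $H$ all of whose edges lie in that class. This gives two cases.
\begin{enumerate}[(a)]
\item If the class is a single colour $j\in[s]\setminus\{i\}$, then every copy of $Q_j$ inside $H'$ is monochromatic in colour $j$. So it suffices that $H$ contains a copy of $Q_j$. By \eqref{small_subgraphs}, this holds with high probability, since $p\gg n^{-1/\delta}\geq n^{-1/\mu}\geq n^{-1/m(Q_j)}$ for large $n$.
\item If the class is the merged one, then $H'$ is edge-coloured with the $t-s+1$ colours $i,s+1,\dots,t$. It suffices that
\[H\to(Q_i,Q_{s+1},\dots,Q_t).\]
\end{enumerate}

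For case (b), I will invoke the asymmetric $1$-statement of Mousset, Nenadov and Samotij~\cite{mousset2020}, as presented in \Cref{preliminaries}. The ordering hypothesis gives $m_r(Q_i)\geq m_r(Q_{s+1})\geq m_r(Q_{s+2})\geq\dots\geq m_r(Q_t)$, so $Q_i$ and $Q_{s+1}$ are the two densest graphs in this tuple. Together with $m_r(Q_{s+1})>1$, this should give a constant $C$ such that
\[H^{(r)}(n,p)\to(Q_i,Q_{s+1},\dots,Q_t)\quad\text{with high probability whenever } p\geq Cn^{-1/m_r(Q_i,Q_{s+1})}=Cn^{-1/\sigma}.\]
Since $p\gg n^{-1/\delta}\geq n^{-1/\sigma}$, this applies.

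To conclude, take a union bound over the finitely many events: $H\supseteq Q_j$ for each $j\in[t]$, and $H\to(Q_i,Q_{s+1},\dots,Q_t)$. With high probability all of them hold. On that event, the argument above works for every $G\in\mathcal{R}(H;s)$ and every $t$-colouring of $G$ simultaneously, because the choice of $i$ is deterministic. Hence $\mathcal{R}(H;s)\subseteq\mathcal{R}(Q_1,\dots,Q_t)$ with high probability.

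The main obstacle is checking that the hypotheses of the Mousset--Nenadov--Samotij theorem, in the exact form stated in the preliminaries, are met by the tuple $(Q_i,Q_{s+1},\dots,Q_t)$. This includes the ordering of the $m_r$ values, the condition that the second density exceeds $1$ (or whatever nondegeneracy condition they require), and possibly ties when $m_r(Q_i)=m_r(Q_{s+1})$. If their statement is only for tuples in which the first two entries dominate, the ordering above supplies exactly that. Everything else is routine.
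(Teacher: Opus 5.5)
Your proposal is correct and matches the paper's proof: the paper uses the same partition, namely $A_x=\{x,s+1,\dots,t\}$ with $x$ attaining $\sigma$ and singletons $\{i\}$ for the other $i\in[s]$. It handles the singleton parts via \eqref{small_subgraphs} and the merged part via \Cref{1_statement}, whose hypotheses hold exactly as you check ($\ell=t-s+1\geq 2$ because $s<t$); your explicit colour-merging step is simply the unrolled form of the paper's remark that such a partition suffices.
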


We note that the condition $m_r(Q_{s+1})>1$ in~\Cref{our_1_statement} can be removed by a minor refinement of the argument. Nevertheless, as $m_r(Q_{s+1})>1$ is required in the subsequent $0$-statement, we retain it here for consistency.

The proof of~\Cref{our_1_statement} is relatively straightforward and builds on the following observation. For two tuples $(F_1,\dots,F_s)$ and $(Q_s,\dots,Q_t)$ of $r$-graphs, if there exists a partition $A_1\dot\cup\dots\dot\cup A_s$ of $[t]$ such that $F_i\to(Q_j)_{j\in A_i}$ for all $i\in[s]$, then $\mathcal{R}(F_1,\dots,F_s)\subseteq\mathcal{R}(Q_1,\dots,Q_t)$. This gives a sufficient condition for $\mathcal{R}(F_1,\dots,F_s)\subseteq\mathcal{R}(Q_1,\dots,Q_t)$, and, together with asymmetric Ramsey properties of random hypergraphs, allows us to establish the $1$-statement.

Naturally, one of the main obstacles in proving a $0$-statement lies in identifying an appropriate necessary condition for $\mathcal{R}(F_1,\dots,F_s)\subseteq\mathcal{R}(Q_1,\dots,Q_t)$. A classical result of Graham, {\L}uczak, R\"odl, and Ruci\'nski~\cite{graham2002} shows that, when $F_1,\dots,F_s$ and $Q_1,\dots,Q_t$ are all graph cliques, the sufficient condition above is also necessary. Here, we extend this result to a more general setting, by lifting the restriction on $F_1,\dots,F_s$ and allowing $Q_1,\dots,Q_t$ to range over a large class of $r$-graphs.

We say that an $r$-graph $F$ is non-trivially connected if, for any vertex cut $S\subseteq V(F)$, the induced $r$-graph $F[S]$ is not $r$-partite. Let $\mathcal{X}_r$ be the family of all non-trivially connected $r$-graphs. Let $\mathcal{Y}_r$ be the family of all $r$-graphs, in which every pair of vertices is contained in an edge. We note that although $\mathcal{X}_r$ and $\mathcal{Y}_r$ have a large intersection, neither family is contained in the other. For example, an $r$-graph obtained from a large complete $r$-graph by deleting all edges containing two fixed vertices belongs to $\mathcal{X}_r$ but not to $\mathcal{Y}_r$. On the other hand, the family $\mathcal{Y}_r$ contains all complete $r$-graphs, whereas $\mathcal{X}_r$ contains only complete $r$-graphs on at least $2r$ vertices.

\begin{theorem}
\label{structural_theorem}
Let $r\geq2$ and $s,t\geq1$ be integers. Let $F_1,\dots,F_s$ and $Q_1,\dots,Q_t$ be $r$-graphs, where $Q_1,\dots,Q_t\in\mathcal{X}_r\cup\mathcal{Y}_r$. Then
\[\mathcal{R}(F_1,\dots,F_s)\subseteq\mathcal{R}(Q_1,\dots,Q_t)\]
if and only if there exists a partition $A_1\dot\cup\dots\dot\cup A_s$ of $[t]$ such that $F_i\to(Q_j)_{j\in A_i}$ for all $i\in[s]$.
\end{theorem}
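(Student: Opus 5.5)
The direction ``$\Leftarrow$'' is the easy one. Suppose $A_1\dot\cup\dots\dot\cup A_s=[t]$ with $F_i\to(Q_j)_{j\in A_i}$, let $G\to(F_1,\dots,F_s)$, and fix any $t$-edge-coloring $\chi$ of $G$. Merge the colors by setting $\psi(e)=i$ whenever $\chi(e)\in A_i$. Then $G\to(F_1,\dots,F_s)$ gives some $i$ and a copy of $F_i$ all of whose edges satisfy $\psi=i$. On that copy $\chi$ only uses colors from $A_i$, so $F_i\to(Q_j)_{j\in A_i}$ produces a copy of $Q_j$ in color $j$ for some $j\in A_i$. Hence $G\in\mathcal{R}(Q_1,\dots,Q_t)$. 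This part uses no assumption on the $Q_j$.

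For ``$\Rightarrow$'' I argue by contraposition. Assume that for every partition $A=(A_1,\dots,A_s)$ of $[t]$ there is an index $i=i(A)$ and a coloring $c_A$ of $E(F_{i})$ with colors from $A_{i}$ that has no copy of $Q_j$ in color $j$ for any $j\in A_{i}$. I want to build $G\in\mathcal{R}(F_1,\dots,F_s)\setminus\mathcal{R}(Q_1,\dots,Q_t)$. The plan follows the Ne\v{s}et\v{r}il--R\"odl philosophy behind the families $\mathcal{X}_r,\mathcal{Y}_r$, which is why Ramsey-avoidability holds there.

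\emph{Step 1.} Construct $G\to(F_1,\dots,F_s)$ whose copies of $F_1,\dots,F_s$ are glued in a sparse, tree-like way. Concretely, I would use a partite or amalgamation construction, or a sparse Ramsey theorem giving high girth of the hypergraph of copies. The requirement is that any two copies meet in a set whose induced subgraph is $r$-partite, or inside a single edge, and that the copies contain no short Berge cycles.

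\emph{Step 2.} Show that every copy of a highly connected $Q\in\mathcal{X}_r\cup\mathcal{Y}_r$ in $G$ lies inside a single copy of some $F_i$. For $Q\in\mathcal{Y}_r$, every pair of vertices of $Q$ lies in an edge, so $Q$ cannot use two vertices that are separated in the amalgamation. For $Q\in\mathcal{X}_r$, a copy spread over several $F$-copies would have a vertex cut contained in an overlap, and that cut would induce an $r$-partite graph, contradicting non-trivial connectivity.

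\emph{Step 3.} Use the colorings $c_A$ to define a $t$-coloring of $G$ with no $Q_j$ in color $j$. By Step 2 it suffices to check this copy by copy. The edges of $G$ are built in generations along the tree-like amalgamation. To each edge we attach a ``label'' partition $A$ and color according to $c_A$ on the copy of $F_{i(A)}$ that created it. The construction in Step 1 must be arranged so that $G\to(F_1,\dots,F_s)$ is still forced, for instance by taking it Ramsey for an auxiliary tuple indexed by all partitions, or by iterating over the finitely many partitions. At the same time the labels must stay consistent on the small overlaps.

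I expect Step 3, together with the bookkeeping it forces back onto Step 1, to be the main obstacle. The difficulty is making each local coloring compatible with the ``$\exists i(A)$'' quantifier while preserving both the Ramsey property for $(F_1,\dots,F_s)$ and the containment property of Step 2. My first attempt would imitate Graham--{\L}uczak--R\"odl--Ruci\'nski: perform the amalgamation over the $(s+1)^{\,t}$-type product of partition choices, so that a monochromatic $F_i$ in color $i$ exists in any $s$-coloring, but only on copies where $c_A$ is the relevant local coloring.
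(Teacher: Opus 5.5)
Your ``$\Leftarrow$'' direction is correct. The ``$\Rightarrow$'' direction has a genuine gap. Step~3, which you flag as the main obstacle, is not bookkeeping: it is the whole content of the argument, and the plan ``first build some sparse $G\to(F_1,\dots,F_s)$, then find a $t$-coloring'' has no mechanism to succeed. The $t$-coloring $\chi$ witnessing $G\not\to(Q_1,\dots,Q_t)$ must be fixed in advance. An adversarial $s$-coloring $\psi$ then decides both which index $i$ and which copy of $F_i$ the property $G\to(F_1,\dots,F_s)$ produces. Steps~1--2 give you no control over how $\chi$ looks on that copy. They also give no link between $i$ and a partition $A$ with $i=i(A)$, which is the only information the hypothesis provides. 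Labeling edges by ``the copy that created them'' does not repair this, since the forced copy need not be a generating copy and edges are shared by many copies. So the construction in Step~1 cannot be an ordinary Ramsey graph for $(F_1,\dots,F_s)$; you need a stronger property coupled to $\chi$, which the proposal never formulates.

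The missing idea is to build the target coloring into the structure itself and ask for a class-wise Ramsey property. First form one $t$-edge-partitioned $r$-graph $H=(H_1,\dots,H_t)$. Take it to be the vertex-disjoint union, over all $s^t$ ordered partitions $\mathcal{A}$ of $[t]$, of a copy of $F_{i(\mathcal{A})}$ colored by your $c_{\mathcal{A}}$; then each $H_j$ is $Q_j$-free. Next show (Lemma~\ref{partition_Ramsey}) that there is an edge-partitioned $G=(G_1,\dots,G_t)$ with the following two properties. Each $G_j$ is $Q_j$-free. Every $s$-coloring $\psi$ of $G$ yields a copy $\tilde H$ of $H$ in which each class $\tilde H_j\subseteq G_j$ is monochromatic, possibly in different colors for different $j$. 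This comes from the Ne\v{s}et\v{r}il--R\"odl Ramsey theorem for $\gamma$-systems with $t$ edge types. You apply it iteratively with $A$ a single edge of type $j$, for $j=1,\dots,t$, forbidding the system ``$Q_j$ with all edges of type $j$.'' The hypothesis $Q_j\in\mathcal{X}_r\cup\mathcal{Y}_r$ is exactly what makes these forbidden systems $\mathrm{Hom}(A_i)$-connected or irreducible. Now coloring each edge of $G$ by its type shows $G\not\to(Q_1,\dots,Q_t)$. For any $\psi$, the map $j\mapsto\psi(\tilde H_j)$ defines a partition $\mathcal{A}$. The copy of $F_{i(\mathcal{A})}$ inside $\tilde H$ uses only types in $A_{i(\mathcal{A})}$, so it is monochromatic in color $i(\mathcal{A})$. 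This is how the quantifier ``there exists $i(\mathcal{A})$'' is absorbed: $H$ contains a witness for every partition, and the adversary's coloring selects which one is used. Your Step~2 becomes unnecessary, because $Q_j$-freeness of $G_j$ is output directly by the forbidden-subsystem version of the theorem.
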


Applying~\Cref{structural_theorem} we are able to establish the $0$-statement for $\mathcal{R}(H^{(r)}(n,p);s)\subseteq\mathcal{R}(Q_1,\dots,Q_t)$ when $Q_1,\dots,Q_s$ are chosen from $\mathcal{X}_r\cup\mathcal{Y}_r$. In the case $r\geq3$, however, an additional property is required. 

An $r$-graph $F$ is called strictly $r$-balanced if $m_r(F)>m_r(J)$ holds for all $J\subsetneq F$. For example, any complete $r$-graph on at least $r$ vertices is strictly $r$-balanced. For two $r$-graphs $T$ and $F$ with $m_r(T)\geq m_r(F)>0$, we say that $T$ is strictly $F$-balanced if $m_r(T,F)$ is attained only by $J=T$. Furthermore, following the terminology in~\cite{bowtell2023}, an ordered pair $(T',F')$ is called a heart of $(T,F)$ if
\begin{itemize}
\item $F'\subseteq F$ is strictly $r$-balanced and $m_r(F')=m_r(F)$;
\item if $m_r(T)=m_r(F)$, then $T'\subseteq T$ is strictly $r$-balanced and $m_r(T')=m_r(T)$;
\item if $m_r(T)>m_r(F)$, then $T'\subseteq T$ is strictly $F'$-balanced and $m_r(T',F')=m_r(T,F)$.
\end{itemize}
We say that an ordered pair $(T,F)$ of $r$-graphs is Ramsey-dense if there is a heart $(T',F')$ of $(T,F)$, such that every $G\in\mathcal{R}(T',F')$ satisfies $m(G)>m_r(T',F')$. With the necessary definitions in place, we state our $0$-statement as follows.

\begin{theorem}
\label{our_0_statement}
Let $r,s,t\geq2$ be integers with $s<t$. Let $Q_1,\dots,Q_t$ be $r$-graphs chosen from $\mathcal{X}_r\cup\mathcal{Y}_r$ such that $m_r(Q_1)\geq\dots\geq m_r(Q_t)>0$ and $m_r(Q_{s+1})>1$. Let $\delta=\max\{\mu,\sigma\}$, where
\[\mu=\max\{m(Q_1),\dots,m(Q_t)\}\quad\text{and}\quad\sigma=\min\{m_r(Q_i,Q_{s+1}):i\in[s]\}.\]
If $r\geq3$ and $\mu<\sigma$, assume additionally that $(Q_i,Q_{j})$ is Ramsey-dense for all $i<j\in[s+1]$. Then
\[\lim_{n\to\infty}\mathbb{P}\big(\mathcal{R}(H^{(r)}(n,p);s)\subseteq\mathcal{R}(Q_1,\dots,Q_t)\big)=0\quad\text{if $p\ll n^{-1/\delta}$}.\]
\end{theorem}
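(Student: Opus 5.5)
We apply \Cref{structural_theorem} with $F_1=\dots=F_s=H$ and reduce the $0$-statement to showing that, with high probability, there is \emph{no} partition $A_1\dot\cup\dots\dot\cup A_s$ of $[t]$ with $H\to(Q_j)_{j\in A_i}$ for all $i\in[s]$. All $Q_j$ lie in $\mathcal{X}_r\cup\mathcal{Y}_r$, so the theorem applies, and the failure of the partition condition gives $\mathcal{R}(H;s)\not\subseteq\mathcal{R}(Q_1,\dots,Q_t)$. Since $p\ll n^{-1/\delta}$ means $p\ll n^{-1/\mu}$ and $p\ll n^{-1/\sigma}$ simultaneously, we will use both.

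\textbf{Step 1: reduce to a statement about pairs.} Since $s<t$, every partition of $[t]$ into $s$ classes has some class $A_i$ with $|A_i|\ge2$, unless a class is empty. An empty class is impossible to satisfy: $H\to()$ is false for a nonempty-colour Ramsey statement, and we adopt the convention that the partition has nonempty parts, otherwise there is nothing to check. We proceed by pigeonhole. Among the $s+1$ indices $1,\dots,s+1$, two indices $i<j\le s+1$ lie in the same class $A_k$. Then $H\to(Q_\ell)_{\ell\in A_k}$ implies $H\to(Q_i,Q_j)$, by colouring with only two colours (merging). It therefore suffices to show that, with high probability, $H\not\to(Q_i,Q_j)$ simultaneously for all pairs $i<j\in[s+1]$. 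There are finitely many pairs, so a union bound reduces this to a single pair.

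\textbf{Step 2: the $0$-statement for a fixed pair.} Fix $i<j\le s+1$. By the ordering, $m_r(Q_i)\ge m_r(Q_j)\ge m_r(Q_{s+1})>1$, and monotonicity of $m_r(\cdot,F)$ in its second argument gives $m_r(Q_i,Q_j)\ge m_r(Q_i,Q_{s+1})\ge\sigma$. (I would verify this by the definition: a larger $m_r(F)$ shrinks the denominator.) Hence $p\ll n^{-1/\sigma}\le n^{-1/m_r(Q_i,Q_j)}$. For $r=2$ we invoke the $0$-statement of \eqref{asymmetric_random_Ramsey} (Christoph--Martinsson--Steiner--Wigderson), which requires $m_2(Q_j)>1$; this holds. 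For $r\ge3$ we invoke the Bowtell--Hancock--Hyde $0$-statement from \Cref{preliminaries}, whose extra hypothesis is exactly that $(Q_i,Q_j)$ be Ramsey-dense. This is assumed when $\mu<\sigma$.

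\textbf{Step 3: the case $\mu\ge\sigma$ for $r\ge3$.} Here $\delta=\mu$ and $p\ll n^{-1/\mu}$, so by \eqref{small_subgraphs} the random graph $H$ contains no copy of the densest $Q_\ell$. That alone kills only one index, so instead I argue directly. Let $\ell$ attain $\mu=m(Q_\ell)$. Every class $A_k$ containing $\ell$ fails, since $H$ is $Q_\ell$-free and colouring everything in colour $\ell$ avoids all the others only if the other $Q$'s are also absent. More carefully, $H\to(Q_m)_{m\in A_k}$ requires $H$ to contain each $Q_m$, $m\in A_k$; in particular it must contain $Q_\ell$, which it does not w.h.p. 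Since $\ell$ lies in some class, no valid partition exists. So the case $\mu\ge\sigma$ needs no Ramsey-dense hypothesis. In fact this argument applies for all $r$ whenever $\mu\ge\sigma$.

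\textbf{Main obstacle.} The substantive point is Step 2: checking that the cited $0$-statements apply verbatim. This includes the ordering conventions $m_r(Q_i)\ge m_r(Q_j)$, the threshold comparison $m_r(Q_i,Q_j)\ge\sigma$, and the fact that the results give the conclusion for all $p\ll n^{-1/m_r}$, not just $p\le Dn^{-1/m_r}$. For the last point, monotonicity of the property $H\not\to(Q_i,Q_j)$ under decreasing $p$ suffices.
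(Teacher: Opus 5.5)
Your argument is correct and follows the paper's strategy. In the regime $\mu\ge\sigma$ you use \eqref{small_subgraphs} to make $H$ free of a densest $Q_\ell$. You then go through \Cref{structural_theorem}, observing that the class containing $\ell$ always fails. The paper instead takes a $Q_\ell$-free member of $\mathcal{R}(H;s)$ directly from the Ne\v{s}et\v{r}il--R\"odl result; the two are equivalent. In the regime $\sigma>\mu$ you combine \Cref{structural_theorem} with the pair $0$-statements. For $r=2$ you cite the Christoph--Martinsson--Steiner--Wigderson $0$-statement directly instead of \Cref{graph_Ramsey_dense} plus \Cref{0_statement}, which is also fine. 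Your pigeonhole over $[s+1]$ is a genuine simplification of the paper's three-case analysis around the index $x$. The chain $m_r(Q_a,Q_b)\ge m_r(Q_a,Q_{s+1})\ge\sigma$, which the paper uses only in its last case, holds for every pair $a<b\le s+1$, so the first two cases are not needed.

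One claim must be fixed: $p\ll n^{-1/\delta}$ does \emph{not} give $p\ll n^{-1/\mu}$ and $p\ll n^{-1/\sigma}$ simultaneously. Since $\delta=\max\{\mu,\sigma\}$, we have $n^{-1/\delta}=\max\{n^{-1/\mu},n^{-1/\sigma}\}$. So you only get the bound for whichever of $\mu,\sigma$ equals $\delta$. In particular, Step 2 (including the case $r=2$) is valid only when $\sigma\ge\mu$. Indeed, if $\mu>\sigma$ and $n^{-1/\sigma}\ll p\ll n^{-1/\mu}$, then \Cref{1_statement} gives $H\to(Q_x,Q_{s+1})$ with high probability, where $\sigma=m_r(Q_x,Q_{s+1})$. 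Your proof survives because Step 3 handles $\mu\ge\sigma$ for every $r$. You should still state the split $\mu\ge\sigma$ versus $\mu<\sigma$ explicitly and drop the ``simultaneously'' sentence.

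Your remark about empty classes is also off. Empty parts are allowed in \Cref{structural_theorem}, and $H\to()$ is vacuously true rather than false when $H$ has an edge. Neither your pigeonhole nor Step 3 uses nonemptiness of the parts, so nothing breaks.
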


Let us remark that the seemingly redundant assumption of Ramsey-denseness is, to some extent, necessary. Indeed, when $\mu<\sigma$, the threshold for $\mathcal{R}(H^{(r)}(n,p);s)\not\subseteq\mathcal{R}(Q_1,\dots,Q_t)$ will be governed by that for $H^{(r)}(n,p)\not\to(Q_i,Q_j)$, for some $i<j\in[s+1]$. It was shown in~\cite{gugelmann2017,thomas2013} (see also~\cite{bowtell2023} for a discussion) that if $(Q_i,Q_j)$ is not Ramsey-dense, then the threshold for $H^{(r)}(n,p)\not\to(Q_i,Q_j)$ could exhibit a different exponent unrelated to $m_r(Q_i,Q_j)$. This shows that $n^{-1/\delta}$ need not be the correct threshold without the Ramsey-denseness assumption.

Bowtell, Hancock, and Hyde established in~\cite{bowtell2023} several sufficient conditions for a pair of $r$-graphs to be Ramsey-dense, one of which asserts that $(Q_i,Q_j)$ is Ramsey-dense if $Q_j$ contains a strictly $r$-balanced and highly chromatic subhypergraph (see~\Cref{Ramsey_dense}). This, together with~\Cref{our_1_statement} and~\Cref{our_0_statement}, gives the following corollary.

\begin{corollary}
\label{complete_threshold}
Let $r,s,t\geq2$ be integers with $s<t$. Let $Q_1,\dots,Q_t$ be complete $r$-graphs such that $v(Q_1)\geq\dots\geq v(Q_t)\geq r$ and $v(Q_{s+1})>r^2-r$. Then
\[\lim_{n\to\infty}\mathbb{P}\big(\mathcal{R}(H^{(r)}(n,p);s)\subseteq\mathcal{R}(Q_1,\dots,Q_t)\big)=\begin{cases}
        1 & \text{if $p\gg n^{-1/\delta}$},\\
        0 & \text{if $p\ll n^{-1/\delta}$},
    \end{cases}\]
where $\delta=\max\{m(Q_1),m_r(Q_s,Q_{s+1})\}$.
\end{corollary}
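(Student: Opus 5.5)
The plan is to derive the corollary directly from \Cref{our_1_statement} and \Cref{our_0_statement}; what needs checking is that complete $r$-graphs satisfy their hypotheses and that $\delta$ takes the stated form. Write $Q_i=K^{(r)}_{k_i}$ with $k_1\geq\dots\geq k_t\geq r$. For complete $r$-graphs, $m_r(K_k^{(r)})=(\binom{k}{r}-1)/(k-r)$ and $m(K_k^{(r)})=\binom{k}{r}/k$. Both are nondecreasing in $k$, since cliques are balanced and strictly $r$-balanced. Hence $m_r(Q_1)\geq\dots\geq m_r(Q_t)>0$ holds; positivity uses $k_t\geq r$, so each $Q_i$ has at least one edge. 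Moreover $\mu=m(Q_1)$.

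Next I would verify $m_r(Q_{s+1})>1$. Since $k_{s+1}>r^2-r\geq r$, we get $k_{s+1}\geq r+1$. For $r=2$ the hypothesis gives $k_{s+1}\geq3$, and $m_2(K_k)=(k+1)/2>1$. For $r\geq3$ and $k=k_{s+1}\geq r+1$, we have $\binom{k}{r}-1>k-r$ whenever $k\geq r+1$ and $r\geq2$; for example $k=r+1$ gives $r>1$. So this condition is in fact mild.

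The role of $v(Q_{s+1})>r^2-r$ is to supply Ramsey-denseness through \Cref{Ramsey_dense}, which I expect says that $(T,F)$ is Ramsey-dense when $F$ contains a strictly $r$-balanced subgraph of chromatic number above some bound such as $r$. A clique $K^{(r)}_k$ has weak chromatic number $\lceil k/(r-1)\rceil$, and $k>r^2-r=r(r-1)$ gives $\lceil k/(r-1)\rceil\geq r+1$. So each $Q_j$ with $j\leq s+1$ is itself strictly $r$-balanced and highly chromatic, since $k_j\geq k_{s+1}$. Therefore $(Q_i,Q_j)$ is Ramsey-dense for all $i<j\in[s+1]$. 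Cliques lie in $\mathcal{Y}_r$, so \Cref{our_0_statement} applies. This matching of the exact chromatic threshold in \Cref{Ramsey_dense} with $r^2-r$ is the step I expect to need the most care; I would consult the precise statement and adjust the chromatic computation to it.

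Finally, I would simplify $\sigma=\min_{i\leq s}m_r(Q_i,Q_{s+1})$ to $m_r(Q_s,Q_{s+1})$. The quantity $m_r(T,F)$ is monotone in $T$ under inclusion, because it is a maximum over subgraphs $J\subseteq T$. Since $Q_s\subseteq Q_i$ for every $i\leq s$, the minimum is attained at $i=s$. Then $\delta=\max\{m(Q_1),m_r(Q_s,Q_{s+1})\}$, and the two statements give the $1$- and $0$-statements respectively.
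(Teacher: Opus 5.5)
Your proposal is correct and takes essentially the same route as the paper. You verify the hypotheses of \Cref{our_1_statement} and \Cref{our_0_statement} for cliques, reduce $\sigma$ to $m_r(Q_s,Q_{s+1})$ using $Q_s\subseteq Q_i$, and get Ramsey-denseness from \Cref{Ramsey_dense} via $\chi(Q_j)\geq v(Q_j)/(r-1)>r$; the lemma's hypothesis is exactly $\chi(F')>r$, so your guessed threshold matches.
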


In a different direction,~\Cref{structural_theorem} is closely related to the problem of Ramsey equivalence. Two tuples of $r$-graphs are said to be Ramsey equivalent if their Ramsey classes are identical. While the systematic study of Ramsey equivalence was initiated by Szab\'o, Zumstein, and Z\"urcher~\cite{szabo2010} and has since motivated extensive research~\cite{axenovich2017,bloom2018,boyadzhiyska2024,boyadzhiyska2024',clemens2020,fox2014,savery2022}, the notion itself had appeared much earlier. In particular, Graham, {\L}uczak, R\"odl, and Ruci\'nski~\cite{graham2002} established that two tuples of complete graphs $(K_{k_1},\dots,K_{k_s})$ and $(K_{\ell_1},\dots,K_{\ell_t})$ are Ramsey equivalent if and only if the multisets $\{k_1,\dots,k_s\}$ and $\{\ell_1,\dots,\ell_t\}$ coincide. See also~\cite{boyadzhiyska2024'} for a different proof of this result. As an application of~\Cref{structural_theorem}, we extend this result to $r$-graphs chosen from $\mathcal{X}_r\cup\mathcal{Y}_r$.

\begin{theorem}
\label{Ramsey_equivalent}
Let $r\geq2$ and $s,t\geq1$ be integers. Let $F_1,\dots,F_s$ and $Q_1,\dots,Q_t$ be $r$-graphs chosen from $\mathcal{X}_r\cup\mathcal{Y}_r$, each containing at least two edges. Then
\[\mathcal{R}(F_1,\dots,F_s)=\mathcal{R}(Q_1,\dots,Q_t)\]
if and only if $(F_1,\dots,F_s)$ and $(Q_1,\dots,Q_t)$ are identical up to reordering and isomorphism.
\end{theorem}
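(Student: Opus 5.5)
The ``if'' direction is trivial: if the tuples agree up to reordering and isomorphism, their Ramsey classes coincide by definition. For the ``only if'' direction I will apply \Cref{structural_theorem} in both directions. All $F_i$ and $Q_j$ lie in $\mathcal{X}_r\cup\mathcal{Y}_r$, so the theorem applies to both inclusions.

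From $\mathcal{R}(F_1,\dots,F_s)\subseteq\mathcal{R}(Q_1,\dots,Q_t)$ we obtain a partition $A_1\dot\cup\dots\dot\cup A_s$ of $[t]$ with $F_i\to(Q_j)_{j\in A_i}$ for all $i$. From the reverse inclusion we obtain a partition $B_1\dot\cup\dots\dot\cup B_t$ of $[s]$ with $Q_j\to(F_i)_{i\in B_j}$ for all $j$. The plan is to show that these partitions force a bijection between the two tuples.

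\textbf{Step 1: every part is nonempty.} If $A_i=\emptyset$, then $F_i\to()$ would be required. With zero colors this is vacuous only when $F_i$ has no edges: an $r$-graph with at least one edge admits no coloring by zero colors, so the arrow statement needs care, and we adopt the convention that $G\to()$ fails when $e(G)\ge1$. Since each $F_i$ has at least two edges, every $A_i$ is nonempty. The same argument shows every $B_j$ is nonempty. Hence $s\le t$ and $t\le s$, so $s=t$ and all parts are singletons. We get bijections $\pi,\rho$ of $[s]$ with $F_i\to Q_{\pi(i)}$ and $Q_j\to F_{\rho(j)}$, where a one-color arrow simply means containment of a copy.

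\textbf{Step 2: from containment to isomorphism.} Define the digraph on the disjoint union of the two tuples with arcs $F_i\to Q_{\pi(i)}$ and $Q_j\to F_{\rho(j)}$. Each arc $X\to Y$ means $X\supseteq$ a copy of $Y$, so $v(X)\ge v(Y)$ and $e(X)\ge e(Y)$. The map $\rho\circ\pi$ is a permutation of $[s]$, so every $F_i$ lies on a directed cycle $F_i\supseteq Q_{\pi(i)}\supseteq F_{\rho\pi(i)}\supseteq\dots\supseteq F_i$. Along this cycle all vertex and edge counts are equal. A copy of $Y$ inside $X$ with $v(X)=v(Y)$ and $e(X)=e(Y)$ is all of $X$, so $X\cong Y$. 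Therefore $F_i\cong Q_{\pi(i)}$ for every $i$, and $\pi$ is the required matching.

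The main subtlety is Step 1: one must justify that \Cref{structural_theorem}, which allows $s,t\ge1$ with empty parts, does not permit an empty $A_i$. This is exactly where the assumption ``at least two edges'' enters. Two edges ensure $F_i\not\to()$ under the natural convention. They also rule out degenerate equivalences such as a single edge being Ramsey for an empty tuple, or a single-edge graph being absorbed by another color. Beyond this, the argument is a short counting and permutation argument.
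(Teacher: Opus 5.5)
\textbf{Step 1 has a genuine gap.} Your Step 2 is fine; the cycle through the permutation $\rho\circ\pi$, comparing both vertex and edge counts, is in fact tidier than the paper's one-pair-at-a-time removal.

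You are not free to choose the convention for $F_i\to()$. Read literally, the definition makes $G\to()$ \emph{true} whenever $e(G)\ge1$, because there are no $0$-edge-colorings to check. It is false only when $e(G)=0$. This literal reading is the one under which \Cref{structural_theorem} is stated and proved. Its proof takes $A_i=\{j\in[t]:\text{$\tilde{H}_j$ is in the $i$-th color}\}$, which is empty for any unused color, so empty parts are allowed and impose no condition.

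Under your convention the theorem is simply false. For any complete $r$-graph $F$ with at least one edge, $\mathcal{R}(F,F)\subseteq\mathcal{R}(F)$, yet $[1]$ has no partition into two nonempty parts. So the partition you obtain may well have empty parts, and ruling them out has to use the equality of the Ramsey classes.

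A symptom of the problem is that your Step 1 only uses $e(F_i)\ge1$, while the statement genuinely fails with a single-edge member. For graphs, $\mathcal{R}(K_3,K_2)=\mathcal{R}(K_3)$, and $K_2,K_3\in\mathcal{Y}_2$. An argument that never really uses the two-edge hypothesis cannot be correct.

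\textbf{The missing argument.} The paper supplies it using the reverse inclusion and the two-edge hypothesis.
\begin{enumerate}
\item Suppose $A_s=\emptyset$; this forces $s\ge2$, since $t\ge1$. Then $A_1,\dots,A_{s-1}$ partition $[t]$ and witness $\mathcal{R}(F_1,\dots,F_{s-1})\subseteq\mathcal{R}(Q_1,\dots,Q_t)=\mathcal{R}(F_1,\dots,F_s)$.
\item Let $G$ be an edge-minimal member of $\mathcal{R}(F_1,\dots,F_s)$, and let $G'$ be $G$ with one edge $e$ deleted.
\item Any $(s-1)$-edge-coloring of $G'$ extends to an $s$-edge-coloring of $G$ in which $e$ alone has color $s$. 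Since $e(F_s)\ge2$, color $s$ contains no copy of $F_s$. So some color $i\le s-1$ contains a copy of $F_i$, which avoids $e$ and therefore lies in $G'$.
\item Hence $G'\in\mathcal{R}(F_1,\dots,F_{s-1})\subseteq\mathcal{R}(F_1,\dots,F_s)$, contradicting the minimality of $G$.
\end{enumerate}
Symmetrically, every $B_j$ is nonempty, using $e(Q_j)\ge2$. From there your count giving $s=t$ and your Step 2 go through unchanged.
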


\subsection{Organization of the paper}
The paper is organized as follows. In~\Cref{preliminaries}, we collect more results on Ramsey properties of random hypergraphs and prove~\Cref{our_1_statement}. In~\Cref{random_proof}, we prove~\Cref{our_0_statement} and~\Cref{complete_threshold}, assuming~\Cref{structural_theorem}. In~\Cref{proof_equivalent}, we prove~\Cref{Ramsey_equivalent}, again assuming~\Cref{structural_theorem}. We establish~\Cref{structural_theorem} in~\Cref{proof_structural} and~\Cref{proof_lemma}. In~\Cref{concluding} we discuss further consequences and the limitations of our method.

\section{Preliminaries and the proof of~\Cref{our_1_statement}}
\label{preliminaries}
\subsection{Preliminary results}
In this subsection, we state several results from~\cite{bowtell2023,christoph2025,mousset2020} that will be used in subsequent proofs.

The first is a $1$-statement for the asymmetric Ramsey properties of random hypergraphs due to Mousset, Nenadov and Samotij~\cite{mousset2020}.

\begin{theorem}[Mousset--Nenadov--Samotij~\cite{mousset2020}]
\label{1_statement}
Let $r,\ell\geq2$ be integers. Let $F_1,\dots,F_\ell$ be $r$-graphs such that $m_r(F_1)\geq\dots\geq m_r(F_\ell)>0$ and $m_r(F_2)>1$. Then there is a constant $C>0$ such that
\[\lim_{n\to\infty}\mathbb{P}\big(H^{(r)}(n,p)\to(F_1,\dots,F_\ell)\big)=1\quad\text{if $p\geq Cn^{-1/m_r(F_1,F_2)}$}.\]
\end{theorem}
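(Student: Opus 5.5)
Since \Cref{1_statement} is quoted from~\cite{mousset2020}, the paper only needs to cite it. If I had to prove it, I would use the hypergraph container method combined with a union bound, which is the route of~\cite{mousset2020}. First I would reduce to a statement about two colour classes. Fix an $\ell$-colouring of $G=H^{(r)}(n,p)$ with no copy of $F_i$ in colour $i$ for all $i$. For $i\geq2$, each colour class $E_i$ is $F_i$-free. So by the container theorem for $F_i$-free $r$-graphs (applicable since $m_r(F_i)>0$), each $E_i$ is contained in a container $C_i=C(S_i)\subseteq\binom{[n]}{r}$ with the following properties.
\begin{itemize}
\item The fingerprint $S_i\subseteq E_i$ has size at most $O(n^{r-1/m_r(F_i)})\le O(n^{r-1/m_r(F_2)})$.
\item $C_i$ contains only $o(n^{v(F_i)})$ copies of $F_i$.
\end{itemize}
The colour-$1$ edges are then all edges of $G$ outside $C_2\cup\dots\cup C_\ell$. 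They must span no copy of $F_1$.

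Next I would prove a supersaturation statement for the complement $D=\binom{[n]}{r}\setminus(C_2\cup\dots\cup C_\ell)$. By the hypergraph removal lemma, each $C_i$ can be made $F_i$-free by deleting $o(n^r)$ edges. Applying Ramsey's theorem for $(F_1,\dots,F_\ell)$ to $r$-sets of vertices (averaging over subsets of size $R(F_1,\dots,F_\ell)$), the set $D$ must then contain $\Omega(n^{v(F_1)})$ copies of $F_1$. It therefore remains to show that, with high probability, for every choice of fingerprints $(S_2,\dots,S_\ell)$ contained in $G$, the random set $G\cap D$ contains a copy of $F_1$.

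The third step is the probabilistic estimate. For fixed fingerprints, the event $S_i\subseteq G$ has probability $p^{|S_i|}$. The number of choices of fingerprints is at most $\exp\bigl(O(n^{r-1/m_r(F_2)}\log n)\bigr)$. I would apply Janson's inequality to $G\cap D$, exposing the edges of $D$, which are disjoint from the fingerprints' containers. Since $D$ carries $\Omega(n^{v(F_1)})$ copies of $F_1$, Janson gives
\[\mathbb{P}(\text{no }F_1\text{ in }G\cap D)\le\exp\Bigl(-\Omega\bigl(\min_{J\subseteq F_1}n^{v(J)}p^{e(J)}\bigr)\Bigr).\]

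The main obstacle is that this Janson bound alone does not beat the fingerprint union bound at $p=Cn^{-1/m_r(F_1,F_2)}$. The exponent $m_r(F_1,F_2)$ arises precisely from balancing copies of $J\subseteq F_1$ against $F_2$-fingerprints. The plan that makes this work is to take containers not for the deterministic family of $F_2$-free graphs, but for the family of $F_2$-free graphs that are \emph{subgraphs of $G$} and avoid copies of $F_1$ "rooted" on them. In other words, I would build an asymmetric container lemma for a hypergraph whose hyperedges are copies of $F_1$ with one edge replaced by an $F_2$-copy, i.e.\ $J$ glued to $F_2$ along each $r$-set. The required codegree conditions hold exactly when $p\geq Cn^{-1/m_r(F_1,F_2)}$, by the definition of $m_r(F_1,F_2)$ with the term $v(J)-r+m_r(F_2)^{-1}$. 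With this lemma, the fingerprint sizes become $O(pn^{r}/C')$, matching the Janson exponent $\Omega(pn^r)$ up to the constant $C$. Choosing $C$ large then closes the union bound. The hypothesis $m_r(F_2)>1$ ensures $pn^{r}\gg n$, hence $\log n$ losses are absorbed. I expect verifying the codegree bounds for this asymmetric container hypergraph to be the technical heart of the proof.
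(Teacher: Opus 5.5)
As you say, the paper does not prove this theorem; it quotes it from \cite{mousset2020}, and the citation is all the paper needs. The self-contained argument you sketch, however, has a genuine gap, and it sits exactly at the step you call the technical heart.

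Your Steps~1--3 are the standard container-plus-Janson argument. You correctly diagnose that it loses a $\log n$ factor in the exponent (the case $m_r(F_1)=m_r(F_2)$ is in fact already handled by those steps). The proposed repair, however, rests on a miscalculation. At $p=Cn^{-1/m_r(F_1,F_2)}$, let $J\subseteq F_1$ attain $m_r(F_1,F_2)$. Then $n^{v(J)}p^{e(J)}=C^{e(J)}n^{r-1/m_r(F_2)}$, which is polynomially smaller than $pn^r=Cn^{r-1/m_r(F_1,F_2)}$ whenever $m_r(F_1,F_2)>m_r(F_2)$.

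\begin{itemize}
\item \emph{The Janson exponent is not $\Omega(pn^r)$.} It is of order $n^{r-1/m_r(F_2)}$, with a constant depending on $C$, and it cannot be improved. By Harris's inequality, $\mathbb{P}(G\cap D\text{ is }F_1\text{-free})\geq\exp\bigl(-O(n^{v(J)}p^{e(J)})\bigr)$.
\item \emph{Larger fingerprints make things worse.} Fingerprints of size $O(pn^r/C')$ cost up to $\exp\bigl(O((\log C'/C')\,pn^r)\bigr)$ in the union bound. That is far worse than the $F_2$-fingerprints of size $O(n^{r-1/m_r(F_2)})$ you started from.
\item \emph{The ``asymmetric container lemma'' is a placeholder.} It is never set up so that the independent sets of the hypergraph (copies of $F_1$ with an edge replaced by $F_2$) correspond to colour classes of a bad colouring. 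A container lemma is a deterministic statement about a fixed hypergraph, so ``codegree conditions holding exactly when $p\geq Cn^{-1/m_r(F_1,F_2)}$'' has no content. If the hypergraph is instead built from $G$, the containers are no longer independent of the edges you expose in Janson's inequality, which is precisely the difficulty.
\item \emph{The role of $m_r(F_2)>1$ is misstated.} The remark that this hypothesis absorbs $\log n$ losses because $pn^r\gg n$ is not meaningful: $pn^r\gg n$ holds regardless, and a $\log n$ factor in the exponent is not absorbed this way.
\end{itemize}

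What is missing is a way to keep the fingerprints at size $\Theta(n^{r-1/m_r(F_2)})$ while cutting their entropy from $\binom{n^r}{s}p^s$ down to $e^{O(s)}$, with a constant growing only like $\log C$. Then the factor $C^{e(J)}$ in the Janson exponent wins. One natural source of such a saving is to assume that every edge of colour $i\geq2$ closes a colour-$1$ copy of $F_1$; otherwise recolour it with colour $1$. Then every fingerprint edge lies in a copy of $F_1$ inside $G$. When $F_1$ is strictly balanced with respect to $m_r(\cdot,F_2)$, an $r$-set has this property with probability of order $n^{v(F_1)-r}p^{e(F_1)}=C^{e(F_1)}n^{-1/m_r(F_2)}$, rather than $p$.

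Turning this into a proof still requires three things. You must decouple this increasing event from the decreasing event that $G\cap D$ is $F_1$-free. You must deal with $F_1$ that are not balanced. And you must control overlapping and degenerate witness configurations, which is an upper-tail type count. None of this appears in your sketch, so as written it does not establish the theorem; the citation to \cite{mousset2020} remains the only valid justification.
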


The second is a corresponding $0$-statement established by Bowtell, Hancock, and Hyde~\cite{bowtell2023}, under an additional Ramsey-denseness assumption.

\begin{theorem}[Bowtell--Hancock--Hyde~\cite{bowtell2023}]
\label{0_statement}
Let $r\geq2$ be an integer. Let $T$ and $F$ be $r$-graphs such that $m_r(T)\geq m_r(F)>1$ and $(T,F)$ is Ramsey-dense. Then there is a constant $D>0$ such that
\[\lim_{n\to\infty}\mathbb{P}\big(H^{(r)}(n,p)\to(T,F)\big)=0\quad\text{if $p\leq Dn^{-1/m_r(T,F)}$}.\]
\end{theorem}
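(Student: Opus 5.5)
Since $T'\subseteq T$ and $F'\subseteq F$ for a heart $(T',F')$ of $(T,F)$, and $m_r(T',F')=m_r(T,F)$ by definition of a heart, I would first reduce to the pair $(T',F')$. A colouring of $H=H^{(r)}(n,p)$ with no red $T'$ and no blue $F'$ also has no red $T$ and no blue $F$. So it suffices to show that w.h.p.\ $H\not\to(T',F')$, where $(T',F')$ is the heart witnessing Ramsey-denseness. From now on I write $T,F$ for $T',F'$. They are then strictly balanced in the relevant sense: $F$ is strictly $r$-balanced, and $T$ is strictly $r$-balanced or strictly $F$-balanced according to whether $m_r(T)=m_r(F)$ or $m_r(T)>m_r(F)$. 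Moreover, every $G\in\mathcal{R}(T,F)$ satisfies $m(G)>m_r(T,F)=:m^*$.

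The colouring would be produced by a peeling procedure in the style of the Kohayakawa--Kreuter and Marciniszyn--Skokan--Sp\"ohel--Steger $0$-statements. Start with $H$ and repeat the following two steps as long as possible:
\begin{itemize}
\item if some edge lies in no copy of $F$ in the current hypergraph, delete it and postpone colouring it \emph{blue};
\item if some edge lies in no copy of $T$, delete it and postpone colouring it \emph{red}.
\end{itemize}
Reinserting the deleted edges in reverse order with these colours creates no monochromatic forbidden copy. An edge added blue lay in no copy of $F$ among the edges present at that moment, so it cannot complete a blue $F$; the red case is symmetric. The process stops at a core $H_0$ in which every edge lies in a copy of $T$ and in a copy of $F$.

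Next consider the components of $H_0$, where two edges are joined if they share a copy of $T$ or of $F$. Every copy of $T$ or $F$ lies inside a single component, so colourings of different components combine freely. It therefore suffices to show that w.h.p.\ every component $G$ of $H_0$ satisfies $m(G)\le m^*$. Ramsey-denseness then says $G\notin\mathcal{R}(T,F)$, so each component has a valid colouring.

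The main step, and the main obstacle, is this density claim. I would prove it with a grow-sequence argument. Each component can be explored by successively attaching copies of $T$ or $F$ that share at least one edge with the structure built so far. Strict balancedness, together with $m_r(T)\ge m_r(T,F)\ge m_r(F)>1$, gives the following control:
\begin{itemize}
\item a non-degenerate attachment (a new copy meeting the structure in exactly one edge, with the remaining vertices new) changes the expected count by a factor $\Theta(1)$ at $p=Dn^{-1/m^*}$, and by a factor at most a small constant multiple of $D$;
\item a degenerate attachment (with overlaps) costs a factor $n^{-\varepsilon}$ for some fixed $\varepsilon>0$.
\end{itemize}
Hence, once the number of degenerate steps exceeds a constant, or the total number of steps exceeds $C\log n$, the expected number of such structures is $o(1)$, provided $D$ is small enough to beat the number of attachment types. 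What remains are bounded-length structures with a bounded number of degenerate steps, drawn from a finite family. For any member $G$ with $m(G)>m^*$, the first-moment form of \eqref{small_subgraphs} shows that $G$ w.h.p.\ does not appear. The delicate parts are these:
\begin{itemize}
\item handling the two different densities for $T$-steps and $F$-steps; here the asymmetric density $m_r(T,F)$ is exactly what balances a $T$-copy hanging on a single edge against the $F$-copies that must cover its other edges, since each edge of the core must also lie in an $F$-copy;
\item checking that a long non-degenerate chain cannot survive in the core, which needs $m_r(F)>1$ so that trees of copies are rare;
\item verifying that the final bounded structures with $m(G)\le m^*$ are precisely the components that remain.
\end{itemize}
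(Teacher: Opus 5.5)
The paper gives no proof of this statement. It is quoted from \cite{bowtell2023} and used as a black box in the proof of Theorem~\ref{our_0_statement}, so your proposal has to stand on its own. Your reductions are sound: passing to the heart $(T',F')$, peeling with reverse-order reinsertion, splitting the core into components, and applying Ramsey-denseness to each component. The gap is the probabilistic claim that w.h.p.\ every core component has $m(G)\le m^*$, which is the entire content of the theorem. Your sketch of it fails in two concrete places.

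\textbf{First, the per-step bounds are false when $m_r(T)>m_r(F)$.}
\begin{itemize}
\item Since $F'$ is strictly $r$-balanced, $e(F')-1=m_r(F')(v(F')-r)$, while $m^*>m_r(F')$. So attaching a copy of $F'$ along a single edge multiplies the expected count by $D^{e(F')-1}n^{(v(F')-r)(1-m_r(F')/m^*)}$. This is a positive power of $n$, because copies of $F'$ are abundant at this density.
\item Overlapping $F'$-attachments need not cost $n^{-\varepsilon}$ either.
\item Conversely, since $e(T')=m^*(v(T')-r+1/m_r(F'))$, even a non-degenerate $T'$-attachment costs $n^{-(1/m_r(F')-1/m^*)}$.
\end{itemize}
These effects cancel only if each $F'$-copy is charged together with the $T'$-copies covering its $e(F')-1$ new edges. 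The real work is to show that this amortised accounting survives every way these copies can overlap one another and the existing structure. That is where strict $F'$-balancedness of $T'$ must enter, and nothing in your outline does it.

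\textbf{Second, the finite-family step fails even when $m_r(T)=m_r(F)$, where your per-step bounds do hold.} The sentence ``what remains are bounded-length structures'' does not follow. Excluding $\ge K$ degenerate steps and more than $C\log n$ steps still leaves lengths up to $C\log n$, so there is no finite family for the first-moment argument to act on.

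Worse, with your peeling rule the core genuinely contains long tree-like pieces.
\begin{itemize}
\item For $T=F$, a chain of $k$ copies glued consecutively along single edges survives, since each edge lies in a copy. For every fixed $k$ the expected number of such chains is $\Theta(n^{r-1/m^*})$.
\item In the asymmetric case, suppose every edge of $T'$ lies in a copy of $F'$ inside $T'$, e.g.\ when both are cliques. Then a $T'$-copy carrying an $F'$-copy on one edge, with $T'$-copies on that $F'$-copy's new edges, also survives. Such units can be iterated at cost $\Theta(1)$ each.
\end{itemize}
So trees of copies are not rare, and long non-degenerate chains do survive.

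You need one of two repairs.
\begin{itemize}
\item A stronger peeling that removes pendant pieces, followed by a proof that explorations with few degenerate steps have bounded length. For instance, delete any edge $e$ that is not the unique common edge $E(L)\cap E(M)=\{e\}$ of a copy $L$ of $T'$ and a copy $M$ of $F'$. On reinsertion, colour $e$ blue if some copy of $T'$ through $e$ has all its other edges red; every copy of $F'$ through $e$ then contains another, red, edge of that copy. Otherwise colour $e$ red.
\item A direct density argument for unbounded components. In the symmetric case this works. Adding a copy that shares an edge never decreases $e-m^*v$, so a component with $m(G)>m^*$ itself satisfies $e(G)>m^*v(G)$. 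Stop the exploration when $e-m^*v$ first becomes positive, and sum $D^{e}n^{-(e-m^*v)/m^*}$ over isomorphism types of explored structures.
\end{itemize}
The second repair does not extend to the asymmetric case: there $F'$-steps decrease $e-m^*v$, which returns you to the first problem.
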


Furthermore, we need two results on Ramsey-denseness.

\begin{theorem}[Christoph--Martinsson--Steiner--Wigderson~\cite{christoph2025}]
\label{graph_Ramsey_dense}
For any graphs $T$ and $F$ with $m_r(T)\geq m_r(F)>1$, $(T,F)$ is Ramsey-dense.
\end{theorem}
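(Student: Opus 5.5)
The plan is to argue by contradiction, working directly with a suitable heart. Since $m_2(T)\geq m_2(F)>1$, I would fix a heart $(T',F')$ of $(T,F)$: take $F'\subseteq F$ strictly $2$-balanced with $m_2(F')=m_2(F)$, and take $T'\subseteq T$ strictly $F'$-balanced with $m_2(T',F')=m_2(T,F)$ (or strictly $2$-balanced if $m_2(T)=m_2(F)$). Set $m^*=m_2(T',F')$. By the definition of Ramsey-dense, it then suffices to prove the following deterministic statement: every graph $G$ with $m(G)\leq m^*$ admits a red/blue colouring of its edges with no red $T'$ and no blue $F'$.

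Suppose this fails, and let $G$ be a counterexample with $G\to(T',F')$, $m(G)\leq m^*$, and $e(G)$ minimal. Minimality gives two facts.
\begin{enumerate}[(i)]
\item Every edge $e$ of $G$ lies both in a copy of $T'$ and in a copy of $F'$. Otherwise, colour $G-e$ by minimality and give $e$ the colour in which it lies in no copy of the corresponding graph.
\item $G$ contains no edge whose removal leaves a valid colouring that extends to $G$.
\end{enumerate}
Because $m(G)\leq m^*$ is inherited by every subgraph, it is enough to exhibit a subgraph $J\subseteq G$ with $e(J)/v(J)>m^*$.

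To find $J$, I would run a growing procedure. Start from a copy of $T'$ and repeatedly attach either a copy of $T'$ or a copy of $F'$ through an edge of the current graph; such copies exist by (i). At each step, track the potential $\lambda(J)=v(J)-e(J)/m^*$, which we want to drive negative. The two balancedness properties do the bookkeeping.
\begin{itemize}
\item A \emph{non-degenerate} attachment of $F'$, meeting the current graph in exactly one edge, changes $v$ by $v(F')-2$ and $e$ by $e(F')-1$. Strict $2$-balancedness and $m_2(F')>1$ control the change in $\lambda$.
\item A non-degenerate attachment of $T'$ is balanced against $F'$ exactly via the definition of $m_2(T',F')$. A pair consisting of a $T'$ hung on $F'$-copies is potential-neutral, in the sense that $e(T')/(v(T')-2+1/m_2(F'))=m^*$.
\item Any \emph{degenerate} attachment, where the new copy meets the current graph in more than an edge, strictly decreases $\lambda$ by a fixed positive amount. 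This follows from strict balancedness: the overlap is a proper subgraph, so its density is strictly below $m^*$.
\end{itemize}
If the process only ever makes non-degenerate attachments, the resulting structure is tree-like. In that case one can colour it explicitly: colour each $T'$-copy so that it misses one edge, which is given to the attached $F'$-copies. Extending this to all of $G$ would contradict (ii).

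So degenerate steps must occur. Once there are boundedly many of them, $\lambda$ becomes negative, giving $J$ with $e(J)/v(J)>m^*$, which is the desired contradiction.

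The main obstacle is the dichotomy just used: showing that the absence of enough degenerate steps really yields a valid colouring. The difficulty is that copies of $T'$ and $F'$ can interlock in complicated ways without immediately forcing density gain. Here I would follow the Christoph--Martinsson--Steiner--Wigderson strategy. First colour greedily along an ordering of the edges. Then analyse a minimal obstruction to this colouring as a bounded-size ``bad'' configuration, and prove a compactness statement: every such configuration has density exceeding $m^*$. This step requires the hypothesis $m_2(F)>1$, which is what makes a single overlapping vertex already costly. This case analysis of overlaps is where I expect essentially all the work to lie.
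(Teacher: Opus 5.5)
There is a genuine gap, and it covers essentially the whole theorem. Note first that the paper gives no proof of this statement: it imports it from Christoph--Martinsson--Steiner--Wigderson, where it is the deterministic statement that completed the Kohayakawa--Kreuter conjecture after the reductions of Mousset--Nenadov--Samotij, Bowtell--Hancock--Hyde and Kuperwasser--Samotij--Wigderson.

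Your reduction to a fixed heart is fine: it suffices to show that every $G$ with $m(G)\le m^*$ has a colouring with no red $T'$ and no blue $F'$. Fact (i) for a minimal counterexample is also fine. Fact (ii), however, is vacuous: since $G\to(T',F')$, $G$ has no valid colouring at all. So ``extending the explicit colouring of the tree-like structure to all of $G$ contradicts (ii)'' merely restates what you want to prove. Moreover, the structure $J$ you grow is only the union of the copies your process happened to attach, starting from one copy of $T'$. A valid colouring of $J$ says nothing about $E(G)\setminus E(J)$. It also says nothing about copies of $T'$ or $F'$ that use edges from several attached pieces, or from $J$ and the rest of $G$, without being among the attached copies. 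Hence ``no degenerate steps'' in one run of the process does not make $G$ colourable.

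You then delegate exactly this step to ``the CMSW strategy'', asserting that a minimal obstruction to a greedy colouring is a bounded-size configuration of density $>m^*$. Nothing in your sketch bounds the size of such an obstruction, and the deterministic statement concerns arbitrarily large $G$. This is precisely the part that was open, for instance when $F'$ is bipartite, where the chromatic criterion of Bowtell--Hancock--Hyde quoted in the paper gives nothing.

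The bookkeeping you do state is also inaccurate. Take $\lambda(J)=v(J)-e(J)/m^*$.
\begin{itemize}
\item A non-degenerate $F'$-attachment changes $\lambda$ by $(e(F')-1)\bigl(1/m_2(F')-1/m^*\bigr)\ge 0$.
\item An attachment along an overlap $K\subsetneq F'$ with $v(K)\ge 3$ changes $\lambda$ by that amount minus $(v(K)-2)-(e(K)-1)/m^*$. The subtracted term is positive by strict $2$-balancedness, but the total need not be negative. For example, take $F'=C_4$, $T'=K_{10}$, and $K$ an edge plus one extra vertex. So degenerate $F'$-steps are only \emph{better than} non-degenerate ones; they do not necessarily decrease $\lambda$.
\item The relevant overlap measures are $(e(K)-1)/(v(K)-2)<m_2(F')$ inside $F'$, and $e(K)/(v(K)-2+1/m_2(F'))<m^*$ inside $T'$. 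It is not ``density below $m^*$''.
\item The potential-neutral gadget is an $F'$-copy with $T'$-copies hung on its other $e(F')-1$ edges, not a $T'$ hung on $F'$-copies.
\end{itemize}
When $m_2(T)>m_2(F)$, your own computation shows that every $T'$-attachment, degenerate or not, already lowers $\lambda$ by some fixed $\varepsilon>0$. The entire difficulty is therefore controlling how $F'$-copies, which raise $\lambda$, interlock with the $T'$-copies, and that is exactly the step your proposal leaves undone.
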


The next is a sufficient condition for Ramsey-denseness of pairs of $r$-graphs. Given an $r$-graph $F$, the chromatic number $\chi(F)$ is the minimum number of colors needed to color $V(F)$ without a monochromatic edge.

\begin{lemma}[Bowtell--Hancock--Hyde~\cite{bowtell2023}]
\label{Ramsey_dense}
Let $r\geq2$ be an integer. Let $T$ and $F$ be $r$-graphs with $m_r(T)\geq m_r(F)>1$. If there exists a strictly $r$-balanced $F'\subseteq F$ such that $m_r(F')=m_r(F)$ and $\chi(F')>r$, then $(T,F)$ is Ramsey-dense.
\end{lemma}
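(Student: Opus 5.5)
The plan is to take $F'$ to be the subgraph supplied by the hypothesis, so that $F'$ is strictly $r$-balanced with $m_r(F')=m_r(F)$. We then complete it to a heart $(T',F')$ of $(T,F)$. If $m_r(T)=m_r(F)$, let $T'\subseteq T$ be a minimal subgraph with $m_r(T')=m_r(T)$; minimality makes it strictly $r$-balanced. If $m_r(T)>m_r(F)$, let $T'\subseteq T$ be a minimal subgraph attaining $m_r(T,F')=m_r(T,F)$. Such a $T'$ exists because $m_r(\cdot,F')$ depends only on $m_r(F')=m_r(F)$, and minimality makes it strictly $F'$-balanced.

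It then remains to show the contrapositive of Ramsey-denseness: every $r$-graph $G$ with $m(G)\le m_r(T',F')$ admits a red/blue colouring with no red $T'$ and no blue $F'$. Write $d=m_r(T',F')$. I would proceed in two reductions.

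\textbf{First reduction.} We may delete from $G$ every edge lying in no copy of $T'$ (colour it blue at the end) and every edge lying in no copy of $F'$ (colour it red at the end). This is safe provided each such edge creates no new monochromatic copy. Iterating, we reduce to a \emph{core} in which every edge lies in both a copy of $T'$ and a copy of $F'$. The hypothesis $\chi(F')>r$ enters here and later in a specific way: every $r$-partite $r$-graph is $F'$-free. Hence any set of edges that is ``transversal'' with respect to some $r$-partition of its vertex set can be coloured blue without creating a blue $F'$.

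\textbf{Second step: a structural dichotomy for the core.} I would build the core as a grow sequence $G_0\subseteq G_1\subseteq\cdots$. Each step attaches a copy of $T'$ or $F'$ that shares at least one edge with the previous structure. We then compare $e(G_i)$ against $d\,v(G_i)$:
\begin{itemize}
\item A step attaching a copy of $T'$ along an intersection $J$ adds $v(T')-v(J)$ vertices and $e(T')-e(J)$ edges. Strict $F'$-balancedness (respectively strict $r$-balancedness) of $T'$ gives a strict gain of density whenever the attachment is ``degenerate'', i.e.\ overlaps in more than a single edge.
\item The same holds for copies of $F'$, by strict $r$-balancedness of $F'$.
\item The defining formula $m_r(T',F')=e(T')/(v(T')-r+1/m_r(F'))$ shows that the ``regular'' steps exactly balance the density at $d$. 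Here a regular step glues one copy of $F'$ per edge of a copy of $T'$, sharing only that edge.
\end{itemize}
Consequently, if $m(G)\le d$, only finitely many degenerate steps can occur, bounded by a constant depending on $T'$ and $F'$. Every component of the core is then a tree-like arrangement of copies of $T'$ and $F'$ glued along single edges, plus boundedly many exceptional attachments.

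For such tree-like structures I would colour explicitly. In each copy of $T'$ we colour blue one edge that is not shared with a previously processed copy. That edge lies only in ``private'' copies of $F'$ hanging off it, and these are destroyed by the red colour on their other edges. The blue graph is then a union of edge-disjoint pieces that can be arranged to be $r$-partite, hence $F'$-free by $\chi(F')>r$. The boundedly many exceptional configurations are handled by a finite case check, again using that blue $r$-partite pieces contain no $F'$.

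\textbf{Expected obstacle.} The main difficulty is the counting lemma behind the dichotomy. We must show that every non-regular overlap of a new copy with the existing structure strictly increases $e-d\,v$ by an amount bounded away from $0$. This requires the strict balancedness conditions at exactly the right level, and we must also check that the resulting bounded exceptions genuinely admit a valid colouring. I would first try to prove this via the standard inequality $e(J)\le d\,(v(J)-r)+d/m_r(F')$ for proper $J\subsetneq T'$, which is where strict $F'$-balancedness is used. The analogous inequality for $F'$ uses strict $r$-balancedness.
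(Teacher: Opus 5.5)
The paper contains no proof of this lemma; it is quoted from Bowtell--Hancock--Hyde~\cite{bowtell2023}. So your sketch has to stand on its own, and as written it has a genuine gap.

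\textbf{The reduction steps.} Your construction of the heart is fine. The core reduction is also standard once the colours are corrected: an edge lying in no copy of $T'$ should be coloured red, and an edge lying in no copy of $F'$ blue. You have them swapped.

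\textbf{The potential argument fails in the asymmetric case.} Write $m=m_r(F')$, $d=m_r(T',F')=e(T')/(v(T')-r+1/m)$ and $\phi(X)=e(X)-d\,v(X)$. Since $e(F')-1=m(v(F')-r)$, gluing a copy of $F'$ along one edge changes $\phi$ by $(m-d)(v(F')-r)$. Gluing a copy of $T'$ along one edge changes $\phi$ by $d/m-1$.

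If $m_r(T)=m_r(F)$, then $d=m$, both changes vanish, and you are running the usual symmetric argument. But if $m_r(T)>m_r(F)$, then $d=m_r(T,F)>m$, and three things go wrong:
\begin{enumerate}
\item Your ``regular'' configuration (a $T'$ with an $F'$ glued on every edge) has $\phi$ smaller than $\phi(T')$ by $e(T')(d-m)(v(F')-r)$, so it does not balance at $d$.
\item Every single-edge $F'$-step strictly lowers $\phi$, creating slack.
\item Strict $r$-balancedness of $F'$ only gives $e(F')-e(J)>m\,(v(F')-v(J))$. This is a gain measured against $m$, not $d$, so degenerate $F'$-steps can also lower $\phi$. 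For example, with $r=2$, $T'=K_{20}$, $F'=K_{10}$ one has $d=209/20$, and attaching a $K_{10}$ along a path with two edges changes $\phi$ by $43-7d<0$.
\end{enumerate}
Consequently $\phi\le 0$ gives no bound on the number of degenerate steps, and the ``tree-like plus boundedly many exceptions'' picture is unsupported. What strict $F'$-balancedness of $T'$ actually gives is the opposite phenomenon. Any new copy of $T'$ sharing at least $r$ vertices with the current structure raises $\phi$ by a positive constant. Hence overlapping copies of $T'$ form clusters of bounded size, while copies of $F'$ face no comparable restriction.

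\textbf{The colouring step has independent problems.}
\begin{itemize}
\item Edge-disjoint $r$-partite pieces that share vertices need not have an $r$-partite, or even $r$-colourable, union: three edges forming a triangle when $r=2$. So ``the blue graph is $r$-partite, hence $F'$-free'' needs a single global vertex partition, which you never produce.
\item The ``finite case check'' is not finite. The exceptional attachments sit inside an unbounded structure and must be coloured compatibly with it. This is also exactly where Ramsey-denseness is known to fail for some hypergraph pairs (the paper's remark citing~\cite{gugelmann2017,thomas2013}), so the hypothesis has to do real work there.
\item You only ever use that $F'$ is not $r$-partite. For $r\ge 3$ this is strictly weaker than $\chi(F')>r$: the $3$-graph on four vertices with three edges is not $3$-partite but is $2$-colourable.
\end{itemize}

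\textbf{A natural way to use $\chi(F')>r$.} Pick $c:V(G)\to[r]$ and colour blue exactly the edges that are not monochromatic under $c$. The blue graph is then properly $r$-coloured, hence $F'$-free. Everything reduces to finding $c$ with no copy of $T'$ inside a colour class, and the bounded $T'$-clusters above are the relevant input for that. This reduction alone cannot cover the case $m_r(T)=m_r(F)$: $m(K_5)=2=m_2(K_3)$, yet every $2$-colouring of $V(K_5)$ has a monochromatic triangle.
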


\subsection{Proof of~\Cref{our_1_statement}}
Let $H=H^{(r)}(n,p)$ and $p\gg n^{-1/\delta}$. To prove that
\[\mathcal{R}(H;s)\subseteq\mathcal{R}(Q_1,\dots,Q_t),\]
it suffices to give a partition $A_1\dot\cup\dots\dot\cup A_s$ of $[t]$ such that $H\to(Q_j)_{j\in A_i}$ for all $i\in[s]$.

Let $x\in[s]$ such that $\sigma=m_r(Q_x,Q_{s+1})$. We consider the partition where $A_i=\{i\}$ for $i\in[s]\setminus\{x\}$ and $A_x=\{x,s+1,\dots,t\}$. Since $\delta\geq\mu=\max\{m(Q_1),\dots,m(Q_t)\}$, by~\eqref{small_subgraphs} with high probability $H$ contains a copy of $Q_j$ for each $j\in[t]$. Thus, $H\to(Q_j)_{j\in A_i}$ for $i\in[s]\setminus\{x\}$ with high probability. Moreover, since $m_r(Q_x)\geq m_r(Q_{s+1})\geq\dots\geq m_r(Q_t)$, $m_r(Q_{s+1})>1$, and
\[p\gg n^{-1/\delta}\geq n^{-1/\sigma}=n^{-1/m_r(Q_x,Q_{s+1})},\]
by~\Cref{1_statement}, $H\to(Q_j)_{j\in A_x}=(Q_x,Q_{s+1},\dots,Q_t)$ almost surely. Therefore, with high probability we have $\mathcal{R}(H;s)\subseteq\mathcal{R}(Q_1,\dots,Q_t)$.\qed

\section{Proofs of~\Cref{our_0_statement} and~\Cref{complete_threshold}}
\label{random_proof}
\subsection{Proof of~\Cref{our_0_statement}}
Recall that $Q_1,\dots,Q_t$ are $r$-graphs chosen from $\mathcal{X}_r\cup\mathcal{Y}_r$ such that $m_r(Q_1)\geq\dots\geq m_r(Q_t)>0$ and $m_r(Q_{s+1})>1$. Let $\delta=\max\{\mu,\sigma\}$, where
\[\mu=\max\{m(Q_1),\dots,m(Q_t)\}\quad\text{and}\quad\sigma=\min\{m_r(Q_i,Q_{s+1}):i\in[s]\}.\]

Let $H=H^{(r)}(n,p)$ and $p\ll n^{-1/\delta}$. If $\delta=\mu$, then~\eqref{small_subgraphs} implies that, with high probability, $H$ is $Q_j$-free for some $j\in[t]$. A result of Ne\v{s}et\v{r}il and R\"odl~\cite{nesetril1989} (see also~\Cref{partition_Ramsey}) shows that if an $r$-graph $H$ is $Q_j$-free, where $Q_j\in\mathcal{X}_r\cup\mathcal{Y}_r$, then there exists an $r$-graph $G\in\mathcal{R}(H;s)$ that is also $Q_j$-free. Since $G$ is $Q_j$-free, we have that $G\not\in\mathcal{R}(Q_1,\dots,Q_t)$. Thus, $\mathcal{R}(H;s)\not\subseteq\mathcal{R}(Q_1,\dots,Q_t)$ holds with high probability.

If $\delta\neq\mu$, then $\delta=\sigma>\mu$. In this case, by~\Cref{graph_Ramsey_dense} and the assumption for $r\geq3$, we have that $(Q_i,Q_{j})$ is Ramsey-dense for all $i<j\in[s+1]$. Let $x\in[s]$ such that $\delta=\sigma=m_r(Q_x,Q_{s+1})$. By~\Cref{structural_theorem}, to establish
\[\mathcal{R}(H;s)\not\subseteq\mathcal{R}(Q_1,\dots,Q_t),\]
it suffices to prove that, for any partition $A_1\dot\cup\dots\dot\cup A_s$ of $[t]$, there exists some $i\in[s]$ such that $H\not\to(Q_j)_{j\in A_i}$. Since the number of such partitions is bounded by a constant, it suffices to show that for any fixed partition $A_1\dot\cup\dots\dot\cup A_s$ of $[t]$, with high probability $H\not\to(Q_j)_{j\in A_i}$ for some $i\in[s]$. Let $A\in\{A_1,\dots,A_s\}$ be the part that contains $x$. If $A$ contains some $a$ with $1\leq a<x$, then, since $m_r(Q_a,Q_x)\geq m_r(Q_x)\geq m_r(Q_x,Q_{s+1})$, $(Q_a,Q_x)$ is Ramsey-dense, and
\[p\ll n^{-1/\delta}=n^{-1/m_r(Q_x,Q_{s+1})}\leq n^{-1/m_r(Q_a,Q_x)},\]
by~\Cref{0_statement}, $H\not\to(Q_a,Q_x)$ and thus $H\not\to(Q_j)_{j\in A}$ holds with high probability.

If $A$ contains some $b$ with $x<b\leq s+1$, then, since $m_r(Q_x,Q_b)\geq m_r(Q_x,Q_{s+1})$, $(Q_x,Q_b)$ is Ramsey-dense, and
\[p\ll n^{-1/\delta}=n^{-1/m_r(Q_x,Q_{s+1})}\leq n^{-1/m_r(Q_x,Q_b)},\]
by~\Cref{0_statement}, $H\not\to(Q_x,Q_b)$ and thus $H\not\to(Q_j)_{j\in A}$ holds with high probability.

If $A\cap\left([s+1]\setminus\{x\}\right)=\emptyset$, then by the pigeonhole principle there exists an $A'\in\{A_1,\dots,A_s\}\setminus\{A\}$ which contains some $a<b\in[s+1]\setminus\{x\}$. Then since $m_r(Q_a,Q_b)\geq m_r(Q_a,Q_{s+1})\geq m_r(Q_x,Q_{s+1})$, $(Q_a,Q_b)$ is Ramsey-dense, and
\[p\ll n^{-1/\delta}=n^{-1/m_r(Q_x,Q_{s+1})}\leq n^{-1/m_r(Q_a,Q_b)},\]
by~\Cref{0_statement}, $H\not\to(Q_a,Q_b)$ and thus $H\not\to(Q_j)_{j\in A'}$ holds with high probability. This completes the proof.\qed

\subsection{Proof of~\Cref{complete_threshold}}
Let $Q_1,\dots,Q_t$ be complete $r$-graphs with $v(Q_1)\geq\dots\geq v(Q_t)\geq r$ and $v(Q_{s+1})>r^2-r$. It follows that $m(Q_1)\geq\dots\geq m(Q_t)$, $m_r(Q_1)\geq\dots\geq m_r(Q_t)>0$, and $m_r(Q_{s+1})>1$. Moreover, as $Q_i$ contains a copy of $Q_j$ for all $i\leq j$, we have that $m_r(Q_s,Q_{s+1})=\min\{m_r(Q_i,Q_{s+1}):i\in[s]\}$. Then by~\Cref{our_1_statement},
\[\lim_{n\to\infty}\mathbb{P}\big(\mathcal{R}(H^{(r)}(n,p);s)\subseteq\mathcal{R}(Q_1,\dots,Q_t)\big)=1\quad\text{if $p\gg n^{-1/\delta}$},\]
where $\delta=\max\{m(Q_1),m_r(Q_s,Q_{s+1})\}$.

For the $0$-statement, recall that all complete $r$-graphs on at least $r$ vertices are strictly $r$-balanced. For every $j\in[s+1]$, since in any proper vertex-coloring of $Q_j$ the number of vertices receiving the same color is at most $r-1$, we have $\chi(Q_j)\geq v(Q_j)/(r-1)\geq v(Q_{s+1})/(r-1)>r$. By~\Cref{Ramsey_dense}, $(Q_i,Q_j)$ is Ramsey-dense for all $i<j\in[s+1]$. The $0$-statement then follows from~\Cref{our_0_statement}.\qed

\section{Proof of~\Cref{Ramsey_equivalent}}
\label{proof_equivalent}
\begin{proof}[Proof of~\Cref{Ramsey_equivalent}]
If $(F_1,\dots,F_s)$ and $(Q_1,\dots,Q_t)$ coincide up to reordering and isomorphism, then it holds trivially that $\mathcal{R}(F_1,\dots,F_s)=\mathcal{R}(Q_1,\dots,Q_t)$. It suffices to show the other direction.

Suppose that $\mathcal{R}(F_1,\dots,F_s)=\mathcal{R}(Q_1,\dots,Q_t)$. By~\Cref{structural_theorem}, there exists a partition $A_1\dot\cup\dots\dot\cup A_s$ of $[t]$ such that $F_i\to(Q_j)_{j\in A_i}$ for all $i\in[s]$.

We claim that $A_i\neq\emptyset$ for every $i\in[s]$. Indeed, suppose towards a contradiction that $A_s=\emptyset$. Then since $A_1\dot\cup\dots\dot\cup A_{s-1}$ is a partition of $[t]$ such that $F_i\to(Q_j)_{j\in A_i}$ for every $i\in[s-1]$, any $r$-graph that is Ramsey for $(F_1,\dots,F_{s-1})$ is also Ramsey for $(Q_1,\dots,Q_t)$. On the other hand, let $G$ be a minimal $r$-graph that is Ramsey for $(F_1,\dots,F_s)$ and let $G'$ denote an $r$-graph obtained from $G$ by deleting an arbitrary edge $e$. Due to the minimality of $G$, $G'\not\to(F_1,\dots,F_s)$. Moreover, every $(s-1)$-edge-coloring of $G'$ corresponds to an $s$-edge-coloring of $G$ in which $e$ is the only edge in the $s$-th color. Then since $G\to(F_1,\dots,F_s)$ and $F_s$ contains at least two edges, we have that every $(s-1)$-edge-coloring of $G'$ yields a monochromatic copy of $F_i$ in the $i$-th color for some $i\in[s-1]$. Namely, $G'\to(F_1,\dots,F_{s-1})$ and thus $G'\to(Q_1,\dots,Q_t)$. However, this yields that $G'\to(Q_1,\dots,Q_t)$ but $G'\not\to(F_1,\dots,F_s)$, a contradiction to $\mathcal{R}(F_1,\dots,F_s)=\mathcal{R}(Q_1,\dots,Q_t)$.

Therefore, we have a partition $A_1\dot\cup\dots\dot\cup A_s$ of $[t]$ such that $A_i\neq\emptyset$ and $F_i\to(Q_j)_{j\in A_i}$ for every $i\in[s]$. By symmetry there is also a partition $B_1\dot\cup\dots\dot\cup B_t$ of $[s]$ such that $B_j\neq\emptyset$ and $Q_j\to(F_i)_{i\in B_j}$ for every $j\in[t]$. In particular, this implies that $s=t$ and $|A_1|=\dots=|A_s|=|B_1|=\dots=|B_t|=1$.

To show that $(F_1,\dots,F_s)$ and $(Q_1,\dots,Q_t)$ coincide up to reordering and isomorphism, we shall find a bijection $f:[s]\to[t]$ such that, for any $i\in[s]$, $F_i$ is isomorphic to $Q_{f(i)}$. Assume without loss of generality that $e(Q_1)\leq\dots\leq e(Q_t)$. Let $i\in[s]$ and $j\in[t]$ be such that $A_i=\{t\}$ and $B_j=\{i\}$. Then $F_i\to(Q_t)$ and $Q_j\to(F_i)$, implying that $Q_j$ contains a copy of $Q_t$. Since $Q_j$ contains no isolated vertices and $e(Q_j)\leq e(Q_t)$, $Q_j$ and $Q_t$ are isomorphic, from which it follows that $F_i$ and $Q_t$ are isomorphic. Observe that the tuples $(F_1,\dots,F_{i-1},F_{i+1},F_s)$ and $(Q_1,\dots,Q_{t-1})$ remain Ramsey equivalent. By iterating the argument, we obtain a bijection $f:[s]\to[t]$ such that $F_i$ is isomorphic to $Q_{f(i)}$ for every $i\in[s]$, as desired.
\end{proof}

\section{Proof of~\Cref{structural_theorem}}
\label{proof_structural}
The proof of~\Cref{structural_theorem} relies on the following key lemma, which generalizes a statement of Graham, {\L}uczak, R\"odl, and Ruci\'nski~\cite[Theorem~2.1]{graham2002} for graph cliques. Given an $r$-graph $F$, an edge-partition of $F$ is an ordered tuple of $r$-graphs on the vertex set $V(F)$, such that their edge sets partition $E(F)$.
\begin{lemma}
\label{partition_Ramsey}
Let $r\geq2$ and $s,t\geq1$ be integers. Let $Q_1,\dots,Q_t\in\mathcal{X}_r\cup\mathcal{Y}_r$ be $r$-graphs. Then for any $r$-graph $H$ with an edge-partition $(H_1,\dots,H_t)$, where $H_j$ is $Q_j$-free for each $j\in[t]$, there exists an $r$-graph $G$ with an edge-partition $(G_1,\dots,G_t)$, such that
\begin{itemize}
    \item $G_j$ is $Q_j$-free for each $j\in[t]$;
    \item any $s$-edge-coloring of $G$ yields a copy $\tilde{H}$ of $H$ with the corresponding edge-partition $(\tilde{H}_j)_{j\in[t]}$, such that $\tilde{H_j}$ is monochromatic and is contained in $G_j$ for each $j\in[t]$.
\end{itemize}
\end{lemma}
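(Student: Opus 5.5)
We will prove \Cref{partition_Ramsey} with the Ne\v{s}et\v{r}il--R\"odl partite amalgamation technique, run for \emph{edge-partitioned} (edge-coloured) $r$-graphs and keeping track of every class $Q_j$ separately. The central idea is to view $H$ as an $r$-graph whose edges carry a label in $[t]$, with $H_j$ the edges of label $j$. We then want a labelled $r$-graph $G$ with three properties: (i) every $s$-colouring of $E(G)$ yields a copy of $H$ that preserves labels and in which each label class is monochromatic (different classes may get different colours); (ii) the label-$j$ edges of $G$ contain no $Q_j$; and (iii) $G_j$ is simply the label-$j$ part of $G$.

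Step 1 is a partite lemma. Write $v(H)=a$ and regard $H$ as an $a$-partite $r$-graph with one vertex per part. We first build an $a$-partite labelled $r$-graph $P$ with the same parts structure such that every $s$-colouring of $P$ yields a partite, label-preserving copy of $H$ in which each label class is monochromatic. For this we apply the Hales--Jewett theorem (or the Graham--Rothschild / Ne\v{s}et\v{r}il--R\"odl partite lemma) separately to each label class and iterate over the classes $j=1,\dots,t$. Since the classes are edge-disjoint, a product construction works: take $P$ on parts $V_v=[N]^{\mathcal{E}}$, where the coordinates are indexed by edges of $H$. Combinatorial lines restricted to one class then give monochromatic classes, and refining $t$ times makes every class monochromatic at once. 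The copies of edges in $P$ are unions of disjoint partite copies of edges of $H$, so no new dense structure appears beyond partite copies.

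Step 2 is amalgamation. We start from $G^0$, a labelled $r$-graph that is ``vertex-Ramsey'' for the $a$-partite projections. Concretely, we take $G^0$ to be a disjoint union of copies of $H$ indexed by the $a$-subsets of a large Ramsey set $[M]$ ordered by the vertex ordering of $H$, so that any colouring is forced to be order-canonical by the classical Ramsey theorem for $r$-tuples. We then enumerate the copies $H^{(1)},\dots,H^{(k)}$ of $H$ in $G^0$. Inductively, $G^{i}$ is obtained by taking copies of $G^{i-1}$, one for each edge-copy in $P$, and amalgamating them along $P$ placed on the $i$-th copy. The standard backward induction then shows that $G=G^{k}$ satisfies (i).

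Step 3 checks that each $G_j$ is $Q_j$-free, and we expect this to be the main obstacle. Each amalgamation step glues copies of $G^{i-1}$ along subsets that are independent in the partite sense: two copies intersect in a set meeting each part of $P$ in at most one vertex, and that set carries a partite (hence $r$-partite) structure. Suppose a copy of $Q_j$ in $G^i_j$ is not contained in a single copy of $G^{i-1}$. Its intersection with the gluing locus then contains a vertex cut $S$ of $Q_j$ with $Q_j[S]$ $r$-partite, which contradicts $Q_j\in\mathcal{X}_r$. If instead $Q_j\in\mathcal{Y}_r$, every pair of its vertices lies in an edge. Such an edge must then lie entirely inside one copy, so all vertices of $Q_j$ lie pairwise in common copies, and since the copies meet only in the base partite structure, this forces $Q_j$ into a single copy. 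By induction, $Q_j$ would then lie in $H_j$ or in a label class of $P$, and both are $Q_j$-free; the latter holds because label-$j$ edges of $P$ form a blow-up in which each $Q_j$-copy projects to a homomorphic image in $H_j$, and we need an injective projection. Arranging the construction of $P$ so that the projection of any $\mathcal{X}_r$- or $\mathcal{Y}_r$-type subgraph is injective is the delicate point. We will try to achieve it by demanding that the partite lemma produce a $P$ whose every label class is ``$H_j$-forests of copies'', i.e.\ has the same cut structure. We do this by building $P$ itself through an inner amalgamation, so that the same connectivity argument applies.

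Finally, we set $G_j$ to be the label-$j$ edges of $G$, which is an edge-partition of $G$, and (i) gives the required copy $\tilde H$.
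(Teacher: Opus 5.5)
\textbf{There is a genuine gap, and it starts in Step~1.} The partite lemma you state there is false as soon as $v(H)>r$.

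Take $r=s=2$, $t=1$ and $H=K_3$. In any $3$-partite graph $P$ with parts $V_1,V_2,V_3$, colour every $V_1V_2$-edge red and every other edge blue. Then no partite triangle is monochromatic. More generally, a colouring that depends only on which $r$ parts an edge meets cannot be beaten inside a partite system. So the Hales--Jewett partite lemma can only be applied to $r$-partite systems, all of whose edges have the same type.

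This is why the Ne\v{s}et\v{r}il--R\"odl partite construction works as follows:
\begin{enumerate}
\item it enumerates the $r$-subsets $e_1,\dots,e_k$ of the index set $X$, not the copies of $H$;
\item it applies the partite lemma to the $r$-partite projection of the current picture onto $e_i$;
\item it glues one copy of the previous picture onto each copy of that projection;
\item only at the end does it use $|X|\to(v(H))^r_{s^t}$ to make each label class monochromatic across different $r$-subsets.
\end{enumerate}
Your Step~2 has the Ramsey set, but it enumerates the wrong objects and amalgamates along an $a$-partite $P$ that does not exist.

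The same misplacement breaks Step~3. Two copies glued along an $a$-partite locus meet in a set that need not induce an $r$-partite $r$-graph: a set with one vertex in each of $a>r$ parts can span a clique. So a copy of $Q_j\in\mathcal{X}_r$ straddling two copies yields a vertex cut that is not forced to be $r$-partite, and you get no contradiction. You also leave the $Q_j$-freeness of $P$ itself open, via an unspecified ``inner amalgamation'', and that is the crux.

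In the correct construction both issues disappear. Let $W$ be the union of the $r$ parts indexed by $e_i$; it induces an $r$-partite $r$-graph, and any two copies of the previous picture meet inside $W$. Now take a copy of $Q_j$ lying in no single copy of the previous picture.
\begin{itemize}
\item It cannot lie inside $W$, because members of $\mathcal{X}_r\cup\mathcal{Y}_r$ with at least two edges are not $r$-partite.
\item If $Q_j\in\mathcal{X}_r$, it then has a vertex cut $S\subseteq W$ with $Q_j[S]$ $r$-partite, which is a contradiction.
\item If $Q_j\in\mathcal{Y}_r$, pick a vertex $u\notin W$. Its unique copy contains every other vertex of $Q_j$, since each shares an edge with $u$; again a contradiction.
\end{itemize}
This last argument is exactly what your ``pairwise in common copies forces a single copy'' step is missing. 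Consequently the partite-lemma outputs never have to be $Q_j$-free at all.

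The paper sidesteps all of this. It encodes $(H_1,\dots,H_t)$ as a $\gamma$-system with $\gamma=(r,\dots,r)$. It lets $A_j$ be a single type-$j$ edge and $F_j$ be $Q_j$ placed in type $j$. Then $F_j$ is $\mathrm{Hom}(A_i)$-connected for every $i$ when $Q_j\in\mathcal{X}_r$, and irreducible when $Q_j\in\mathcal{Y}_r$. The paper then applies \Cref{system_Ramsey} iteratively, $C_1\to(B)^{A_1}_s$ and $C_i\to(C_{i-1})^{A_i}_s$, with every $C_i$ being $\{F_1,\dots,F_t\}$-free.
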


Since the proof of~\Cref{partition_Ramsey} uses a result of Ne\v{s}et\v{r}il and R\"odl~\cite{nesetril1989}, originally stated in different terminology, we defer its proof to the next section.

\begin{proof}[Proof of~\Cref{structural_theorem}]
If there is a partition $A_1\dot\cup\dots\dot\cup A_s$ of $[t]$ such that $F_i\to(Q_j)_{j\in A_i}$ for all $i\in[s]$, then obviously every $r$-graph $G$ that is Ramsey for $(F_1,\dots,F_s)$ is also Ramsey for $(Q_1,\dots,Q_t)$.

For the other direction, we let $\mathcal{S}$ be the collection of all $s$-partitions of $[t]$. Suppose that for any $\mathcal{A}=(A_1,\dots,A_s)\in\mathcal{S}$ there exists some index $i(\mathcal{A})\in[s]$ such that $F_{i(\mathcal{A})}\not\to(Q_j)_{j\in A_{i(\mathcal{A})}}$. If there are multiple such indices, we simply fix $i(\mathcal{A})$ to be one of them. We shall construct an $r$-graph $G$ that is Ramsey for $(F_1,\dots,F_s)$ but not for $(Q_1,\dots,Q_t)$.

For every $\mathcal{A}=(A_1,\dots,A_s)\in\mathcal{S}$ we let $\tilde{F}_{i(\mathcal{A})}$ be a copy of $F_{i(\mathcal{A})}$, chosen so that $\tilde{F}_{i(\mathcal{A})}$ and $\tilde{F}_{i(\mathcal{A}')}$ are vertex-disjoint for all distinct $\mathcal{A},\mathcal{A}'\in\mathcal{S}$. Since $\tilde{F}_{i(\mathcal{A})}\not\to(Q_j)_{j\in A_{i(\mathcal{A})}}$, there exists an edge-partition $(\tilde{F}^{j}_{i(\mathcal{A})})_{j\in[t]}$ of $\tilde{F}_{i(\mathcal{A})}$, such that $\tilde{F}^{j}_{i(\mathcal{A})}$ is $Q_j$-free for $j\in A_{i(\mathcal{A})}$ and is empty for $j\not\in A_{i(\mathcal{A})}$. In particular, \[\tilde{F}_{i(\mathcal{A})}=\bigcup_{j\in A_{i(\mathcal{A})}}\tilde{F}^{j}_{i(\mathcal{A})}.\]

For each $j\in[t]$, we let $H_j=\bigcup_{\mathcal{A}\in\mathcal{S}}\tilde{F}^{j}_{i(\mathcal{A})}$. Since $H_j$ is a vertex-disjoint union of $Q_j$-free $r$-graphs, $H_j$ is also $Q_j$-free. Let $H$ be the union of $H_1,\dots,H_t$. Observe that $(H_j)_{j\in[t]}$ is an edge-partition of $H$. Then by~\Cref{partition_Ramsey} there exists an $r$-graph $G$ with an edge-partition $(G_j)_{j\in[t]}$, such that
\begin{enumerate}[(a)]
    \item\label{(a)} $G_j$ is $Q_j$-free for each $j\in[t]$;
    \item\label{(b)} any $s$-edge-coloring of $G$ yields a copy $\tilde{H}$ of $H$ with the corresponding edge-partition $(\tilde{H}_j)_{j\in[t]}$, such that $\tilde{H_j}$ is monochromatic and is contained in $G_j$ for each $j\in[t]$. 
\end{enumerate}

From item~\eqref{(a)} it follows immediately that $G\not\to(Q_1,\dots,Q_t)$.

To show that $G\to(F_1,\dots,F_s)$, we consider an arbitrary $s$-edge-coloring of $G$. By item~\eqref{(b)}, for each $j\in[t]$, there exists a monochromatic copy $\tilde{H}_j$ of $H_j$ in $G_j$. Moreover, $\bigcup_{j\in[t]}\tilde{H}_j$ is a copy of $H$. Consider the $s$-partition $\mathcal{A}=(A_1,\dots,A_s)\in\mathcal{S}$, where
\[A_i=\{j\in[t]:\text{$\tilde{H}_j$ is in the $i$-th color}\}\]
for each $i\in[s]$. Then there exists a copy of $\bigcup_{j\in A_{i(\mathcal{A})}}\tilde{F}^{j}_{i(\mathcal{A})}$ in $\bigcup_{j\in A_{i(\mathcal{A})}}\tilde{H}_j$, which is in the $i(\mathcal{A})$-th color. Since $\bigcup_{j\in A_{i(\mathcal{A})}}\tilde{F}^{j}_{i(\mathcal{A})}=\tilde{F}_{i(\mathcal{A})}$ is a copy of $F_{i(\mathcal{A})}$, we conclude that $G\to(F_1,\dots,F_s)$.
\end{proof}

\section{Proof of~\Cref{partition_Ramsey}}
\label{proof_lemma}
In this section we establish~\Cref{partition_Ramsey} from a classical result of Ne\v{s}et\v{r}il and R\"odl~\cite{nesetril1989} on Ramsey theory for systems. To state this result, we need several definitions concerning systems. The original definitions and statements in~\cite{nesetril1989} are formulated also for the ordered setting. As this is not needed for our application, we omit that aspect.

Let $t\geq1$ be an integer and $\gamma=(r_1,\dots,r_t)$ be a tuple of positive integers. A $\gamma$-system is a pair $(V,\mathcal{E})$, where $V$ is a finite set and $\mathcal{E}=(E_1,\dots,E_t)$ is a tuple of pairwise disjoint sets with $E_j\subseteq\binom{V}{r_j}$ for each $j\in[t]$. The elements in $V$ are called the vertices of the system, while the elements in $E_j$ are called edges.

For two $\gamma$-systems $A=(V(A),\mathcal{E}(A))$ and $B=(V(B),\mathcal{E}(B))$, we say that $A$ is a subsystem of $B$ if $V(A)\subseteq V(B)$ and $E_j(A)=\binom{V(A)}{r_j}\cap E_j(B)$ for every $j\in[t]$. Two $\gamma$-systems $A$ and $\tilde{A}$ are called isomorphic if there exists a bijection $f:V(A)\to V(\tilde{A})$ such that, for any $W\subseteq V(A)$ and $j\in[t]$, $W\in E_j(A)$ if and only if $f(W)\in E_j(\tilde{A})$. We say that a $\gamma$-system $B$ contains a copy of a $\gamma$-system $A$ if $B$ has a subsystem isomorphic to $A$. Let $\binom{B}{A}$ denote the collection of all copies of $A$ in $B$.

A system is called connected if, for any two vertices $x$ and $y$, there exists a sequence of edges such that consecutive edges intersect, with $x$ contained in the first edge and $y$ contained in the last edge. A subsystem $\tilde{B}$ of a system $B$ is called a cut if the system obtained by removing the vertices of $\tilde{B}$ from $B$ is disconnected. Given two systems $A$ and $B$, we say that $B$ is $\mathrm{Hom}(A)$-connected if $B$ contains no cut $\tilde{B}$ that is homomorphic to $A$. Here, $\tilde{B}$ is said to be homomorphic to $A$ if there is a mapping $f:V(\tilde{B})\to v(A)$ such that, for any $W\subseteq V(\tilde{B})$, if $W\in E_j(\tilde{B})$ then $f(W)\in E_j(A)$.

A system is called irreducible if for every two vertices there is an edge containing them. 

Let $s\geq1$ be an integer and $A,B,C$ be $\gamma$-systems. We denote $C\to(B)^A_s$ if for any $s$-coloring of $\binom{C}{A}$, there exists a copy $\tilde{B}$ of $B$ in $C$ such that all members in $\binom{\tilde{B}}{A}$ receive the same color.

\begin{theorem}[Ne\v{s}et\v{r}il--R\"odl~\cite{nesetril1989}]
\label{system_Ramsey}
Let $s,t\in\mathbb{N}$ and $\gamma=(r_1,\dots,r_t)$ be a tuple of positive integers. For any $\gamma$-systems $A$ and $B$, there exists a $\gamma$-system $C$ such that
\[C\to(B)^A_s.\]
Moreover, if $B$ is $\mathcal{F}$-free, where $\mathcal{F}$ is a family of $\mathrm{Hom}(A)$-connected systems and irreducible systems, then $C$ can also be chosen $\mathcal{F}$-free.
\end{theorem}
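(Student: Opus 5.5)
The plan is to follow the \emph{partite construction} of Ne\v{s}et\v{r}il and R\"odl. It has three parts: a partite lemma, an amalgamation sequence driven by the classical Ramsey theorem, and an analysis of which small systems can appear in the amalgamated system. Throughout, let $a=v(A)$ and $b=v(B)$, and work with systems equipped with a projection to an index set. We call a $\gamma$-system $P$ \emph{$N$-partite} if it carries a map $\pi:V(P)\to[N]$ that is injective on every edge. We call it \emph{$A$-partite} if it is $a$-partite with each copy of $A$ that we consider being transversal.

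First I would prove the partite lemma. For an $a$-partite system $B'$ whose transversal copies of $A$ we colour, there is an $a$-partite system $C'$ such that every $s$-colouring of the transversal copies of $A$ in $C'$ yields a copy of $B'$ with all its transversal copies of $A$ monochromatic. I would take $C'$ to be a Hales--Jewett product. Its vertices in part $i$ are the words of length $M$ over the vertices of $B'$ in part $i$. The transversal copies of $A$ in $C'$ correspond to words over $\binom{B'}{A}$. A combinatorial line, monochromatic by the Hales--Jewett theorem for an alphabet of size $|\binom{B'}{A}|$, then gives the required copy of $B'$.

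Second, I would run the amalgamation. By Ramsey's theorem, choose $N$ with $K^{(a)}_N\to(K^{(a)}_b)_s$. Let $P_0$ be the $N$-partite system consisting of vertex-disjoint copies of $B$, one placed transversally on each $b$-subset of $[N]$. Enumerate the $a$-subsets $e_1,\dots,e_m$ of $[N]$. Given $P_{k-1}$, restrict it to the parts indexed by $e_k$; this gives an $a$-partite system $P_{k-1}|_{e_k}$. Apply the partite lemma to it to obtain $C'_k$. Form $P_k$ by taking one copy of $P_{k-1}$ for each copy of $P_{k-1}|_{e_k}$ in $C'_k$, and freely amalgamating these copies along $C'_k$. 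Set $C=P_m$.

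The colouring argument then runs backwards. In $P_m$ we find a copy of $P_{m-1}$ in which all copies of $A$ projecting to $e_m$ are monochromatic. Iterating down to $P_0$, we find a copy of $P_0$ in which the colour of a copy of $A$ depends only on its projection $e\in\binom{[N]}{a}$. This defines an $s$-colouring of $K^{(a)}_N$. A monochromatic $K^{(a)}_b$ then picks out a copy of $B$ in $P_0$ all of whose copies of $A$ have the same colour. Here I must check that every copy of $A$ inside the chosen copy of $B$ is transversal and is a subsystem in the induced sense. This holds because the amalgamations are free: no edge of $C$ joins vertices that never lay together in a common copy of $B$.

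The main obstacle is the \emph{moreover} part, namely showing that $C$ is $\mathcal{F}$-free. I would prove by induction on $k$ that every copy in $P_k$ of an irreducible system, or of a $\mathrm{Hom}(A)$-connected system, already lies inside a single copy of $P_{k-1}$, and hence, by the base case, inside a single copy of $B$. The same must hold for the auxiliary systems $C'_k$ relative to the copies of $P_{k-1}|_{e_k}$. For irreducible $F$ this is the easy case. In a free amalgamation every edge lies in one constituent, so two vertices from different constituents that are not both in the common part share no edge. Thus any copy of $F$ lies in one constituent. For $\mathrm{Hom}(A)$-connected $F$, suppose a copy of $F$ meets two constituents. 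Then the intersection of that copy with the common amalgamation part is a cut. This part lives in $C'_k$, which maps homomorphically to $A$ via its $a$-partition, so the cut is homomorphic to $A$. This contradicts $\mathrm{Hom}(A)$-connectivity. The delicate point, which I expect to need the most care, is verifying that the Hales--Jewett construction $C'_k$ itself introduces no new copies of members of $\mathcal{F}$. I would address this by showing that its edges are coordinate-wise images of edges of $B'$, so that $C'_k$ is again an amalgamation of copies of $B'$ along $a$-partite, hence $A$-homomorphic, parts, and the same cut argument applies.
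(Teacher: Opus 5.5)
The paper does not prove this theorem; it quotes Ne\v{s}et\v{r}il--R\"odl, who state it for \emph{ordered} systems. Your partite-construction route is the right kind of argument, but as written it has two genuine gaps.

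\textbf{First gap: orderings.} Without orderings the statement, as transcribed here, is false for general $A$, so your argument must break somewhere. Take $t=1$ and $\gamma=(2)$, so systems are graphs. Let $A$ be the graph on three vertices with one edge, and let $B$ consist of two disjoint edges; every $3$-subset of $V(B)$ induces a copy of $A$. Given any $C$, fix a linear order on $V(C)$. Colour a copy of $A$ red if its isolated vertex precedes the other two, and blue otherwise. If $v$ is the smallest vertex of a copy of $B$ with edges $vw$ and $xy$, then $\{v,x,y\}$ is red and $\{v,w,x\}$ is blue.

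Your argument fails at the claim that, in the final copy of $P_0$, the colour of a copy of $A$ depends only on its projection $e$. Without orders, transversal copies of $A$ over the same $e$ come in several types, namely bijections $V(A)\to e$ modulo $\mathrm{Aut}(A)$.
\begin{itemize}
\item Your partite lemma for \emph{all} transversal copies is false as soon as $B'$ contains two types: colour by type, and note that partite copies of $B'$ preserve types.
\item Homogenising type by type only makes the colour a function of $(e,\text{type})$.
\item Ramsey's theorem on $\binom{[N]}{a}$ then removes the dependence on $e$ but not on the type, and a single copy of $B$ contains copies of $A$ of several types, exactly as in the example.
\end{itemize}
You need ordered $A$ and $B$ with order-preserving copies, so that each $e$ carries a single type. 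For the paper's purposes this is harmless: there $A=A_j$ is a single edge, all its orderings are isomorphic, and the unordered statement for such $A$ follows from the ordered one.

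\textbf{Second gap: the ``moreover'' part.} This does not go through even with orders. Your assertion that $C'_k$ ``maps homomorphically to $A$ via its $a$-partition'' conflates two properties. Being $a$-partite means the projection is injective on edges. Being homomorphic to $A$ means every edge of type $j$ lands on an edge of $A$ of type $j$. In your picture over $[N]$ the latter fails already for $P_0|_{e_k}$, whose edges are the traces on $e_k$ of copies of $B$, i.e.\ arbitrary $a$-vertex subsystems of $B$. So the cut $F\cap(\text{parts in } e_k)$ need not be homomorphic to $A$.

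The same defect breaks the Hales--Jewett step you flagged. If edges are defined coordinatewise, a line sends an edge of $B'$ lying over a non-edge of $A$ to a non-edge, because at the fixed coordinates it falls on a non-edge of a fixed copy of $A$. Hence lines do not give copies of $B'$. If instead $C'_k$ is taken to be the union of the line-copies, its edges are no longer coordinatewise edges, and new irreducible configurations appear. For instance, for graphs let $A$ be three independent vertices and let $B'$ consist of three disjoint edges, one between each pair of parts, plus an isolated vertex in each part. Then three lines with distinct single moving coordinates produce a triangle, although $B'$ is triangle-free.

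The missing idea is to make every amalgamation base genuinely homomorphic to $A$. Ne\v{s}et\v{r}il and R\"odl achieve this by running the partite construction over a system $C_0\to(B)^A_s$ obtained from the unrestricted statement:
\begin{itemize}
\item $P_0$ is a disjoint union of copies of $B$, one over each copy of $B$ in $C_0$.
\item One treats the copies $A_1,\dots,A_m$ of $A$ in $C_0$ in turn, rather than the $a$-subsets of $[N]$.
\end{itemize}
The projection $P_k\to C_0$ is then a homomorphism, so $P_{k-1}|_{V(A_k)}$ is homomorphic to $A_k\cong A$. For such $B'$ your coordinatewise product lemma is valid, and its output is again homomorphic to $A$. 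Consequently the cuts arising in the amalgamation are homomorphic to $A$, which is what your $\mathrm{Hom}(A)$-connectivity argument needs. The final step then uses $C_0\to(B)^A_s$ in place of Ramsey's theorem.
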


\begin{proof}[Proof of~\Cref{partition_Ramsey}]
Let $r\geq2$ and $s,t\geq1$ be integers. Let $Q_1,\dots,Q_t\in\mathcal{X}_r\cup\mathcal{Y}_r$ be $r$-graphs. Let $H$ be an $r$-graph with an edge-partition $(H_1,\dots,H_t)$, where $H_j$ is $Q_j$-free for each $j\in[t]$.

Let $\gamma=(\underbrace{r,\dots,r}_{\text{$t$ times}})$. We define the $\gamma$-system $B=(V(B),\mathcal{E}(B))$, where
\[V(B)=V(H) \quad\text{and}\quad \mathcal{E}(B)=(E(H_1),\dots,E(H_t)).\]

Further, for each $j\in[t]$, we define the $\gamma$-system $A_j=(V(A_j),\mathcal{E}(A_j))$ by
\[V(A_j)=[r]\quad\text{and}\quad E_i(A_j)=\begin{cases}
\{[r]\} & \text{if $i=j$},\\
\emptyset & \text{otherwise},
\end{cases}\]
and the $\gamma$-system $F_j=(V(F_j),\mathcal{E}(F_j))$ by
\[V(F_j)=V(Q_j) \quad\text{and}\quad E_i(F_j)=\begin{cases}
E(Q_j) & \text{if $i=j$},\\
\emptyset & \text{otherwise}.
\end{cases}\]

Since $Q_j\in\mathcal{X}_r\cup\mathcal{Y}_r$, the $\gamma$-system $F_j$ is either $\mathrm{Hom}(A_i)$-connected for every $i\in[t]$ or irreducible. Let $\mathcal{F}=\{F_1,\dots,F_t\}$. As $H_j$ is $Q_j$-free for each $j\in[t]$, we have that $B$ is $\mathcal{F}$-free.

Applying~\Cref{system_Ramsey} iteratively, we obtain a sequence of $\mathcal{F}$-free $\gamma$-systems $C_1,\dots,C_t$, such that
\[C_1\to(B)^{A_1}_s \quad\text{and}\quad C_{i}\to(C_{i-1})^{A_i}_s\text{ for $i=2,\dots,t$}.\] 
Let $G$ be the $r$-graph with the edge-partition $(G_1,\dots,G_t)$, where
\[V(G)=V(C_t) \quad\text{and}\quad E(G_j)=E_j(C_t)\]
for every $j\in[t]$. Since $C_t$ is $\mathcal{F}$-free, each $G_j$ is $Q_j$-free. Moreover, for any $s$-coloring of $\bigcup_{j\in[t]}\binom{C_t}{A_j}$, there exists a copy $\tilde{B}$ of $B$ in $C_t$ such that, for each $j\in[t]$, $\binom{\tilde{B}}{A_j}$ is monochromatic. This guarantees that any $s$-edge-coloring of $G$ yields a copy $\tilde{H}$ of $H$ with the corresponding edge-partition $(\tilde{H}_j)_{j\in[t]}$, such that $\tilde{H_j}$ is monochromatic and is contained in $G_j$ for each $j\in[t]$.
\end{proof}

\section{Concluding remarks}
\label{concluding}
In this work we investigate the Ramsey classes of random hypergraphs. More precisely, we study the threshold probability for $\mathcal{R}(H;s)\subseteq\mathcal{R}(Q_1,\dots,Q_t)$, where $H=H^{(r)}(n,p)$ is a random $r$-graph and $Q_1,\dots,Q_t$ are fixed $r$-graphs satisfying $m_r(Q_1)\geq\dots\geq m_r(Q_t)>0$ and $m_r(Q_{s+1})>1$. This problem reveals a natural connection between random and structural Ramsey theory. In the genuinely interesting regime $s<t$, we give a $1$-statement for all $r$-graphs (see~\Cref{our_1_statement}), and a corresponding $0$-statement for a large class of $Q_1,\dots,Q_t$, namely those chosen from $\mathcal{X}_r\cup\mathcal{Y}_r$ with certain pairs being Ramsey-dense (see~\Cref{our_0_statement}). The threshold appears to be governed by two density parameters, $\mu=\max\{m(Q_1),\dots,m(Q_t)\}$ and $\sigma=\min\{m_r(Q_i,Q_{s+1}):i\in[s]\}$. We note that when $\mu<\sigma$, the thresholds in~\Cref{our_1_statement} and~\Cref{our_0_statement} can be sharpened to $p\geq Cn^{-1/\sigma}$ and $p\leq Dn^{-1/\sigma}$ for some constants $C,D>0$, since the coarse threshold arises solely from the application of~\eqref{small_subgraphs}.

There are two obvious restrictions in~\Cref{our_0_statement}. First, we require $(Q_i,Q_j)$ to be Ramsey-dense for all $i<j\in[s+1]$. As discussed in~\Cref{introduction}, this condition is, to some extent, necessary, since otherwise the threshold for $H^{(r)}(n,p)\not\to(Q_i,Q_j)$ need not be of the form $n^{-1/m_r(Q_i,Q_j)}$. However, we expect that a more careful analysis could weaken this condition, reducing the number of pairs that need to be Ramsey-dense. On the other hand, it was established in~\cite{christoph2025} that all graph pairs are Ramsey-dense, and several sufficient conditions for a pair of r-graphs to be Ramsey-dense are given in~\cite{bowtell2023}. One of such conditions (see~\Cref{Ramsey_dense}) asserts that $(Q_i,Q_j)$ is Ramsey-dense whenever $Q_j$ contains a strictly $r$-balanced subhypergraph with chromatic number larger than $r$. For $r$-graphs chosen from $\mathcal{X}_r\cup\mathcal{Y}_r$, this is not a particularly restrictive property, since the members of $\mathcal{X}_r\cup\mathcal{Y}_r$ are highly connected and often dense.

The other restriction in~\Cref{our_0_statement} is that $Q_1,\dots,Q_t$ are chosen within $\mathcal{X}_r\cup\mathcal{Y}_r$. This restriction stems from~\Cref{structural_theorem} and ultimately from the result of Ne\v{s}et\v{r}il and R\"odl~\cite{nesetril1989} on Ramsey theory for systems (see~\Cref{system_Ramsey}). In their proof of~\Cref{system_Ramsey}, systems with the desired properties are constructed via iterative amalgamations. Such operations generate many sparse substructures and therefore can avoid only highly connected configurations, such as those described by $\mathcal{X}_r\cup\mathcal{Y}_r$. Consequently, a more general version of~\Cref{structural_theorem} would require methods different from those of Ne\v{s}et\v{r}il and R\"odl~\cite{nesetril1989}. On the other hand,~\Cref{structural_theorem} states that, for certain choices of $Q_1,\dots,Q_t$, the trivial sufficient condition for $\mathcal{R}(F_1,\dots,F_s)\subseteq\mathcal{R}(Q_1,\dots,Q_t)$ is also necessary. Intuitively, this indicates that being Ramsey for $(Q_1,\dots,Q_t)$ is rather restrictive, and thus naturally forces each $Q_i$ to be dense or highly connected. In this light, it is reasonable to suspect that~\Cref{structural_theorem} might fail when $Q_1,\dots,Q_t$ are sparse or less connected, which in turn suggests that the threshold for $\mathcal{R}(H^{(r)}(n,p);s)\subseteq\mathcal{R}(Q_1,\dots,Q_t)$ could exhibit different behavior.

Finally, another direct application of~\Cref{structural_theorem} is to establish the existence of $r$-graphs with counterintuitive Ramsey properties. We illustrate several examples involving complete $r$-graphs.

\begin{corollary}
\label{asymmetric_Ramsey}
Let $k>\ell\geq r\geq2$ be integers. For each of the following properties, there exists an $r$-graph $G$ satisfying it:
\begin{enumerate}[(i)]
    \item\label{(i)} $G\to(K^{(r)}_q,K^{(r)}_\ell)$ and $G\not\to(K^{(r)}_k,K^{(r)}_k)$, where $q=R(K^{(r)}_k,K^{(r)}_k)-1$;
    \item\label{(ii)} $G\to(K^{(r)}_{k-1},K^{(r)}_{k-1},K^{(r)}_\ell)$ and $G\not\to(K^{(r)}_k,K^{(r)}_\ell)$;
    \item\label{(iii)} $G\to(K^{(r)}_{k+1},K^{(r)}_{k-1},K^{(r)}_\ell)$ and $G\not\to(K^{(r)}_k,K^{(r)}_k,K^{(r)}_\ell)$.
\end{enumerate}
\end{corollary}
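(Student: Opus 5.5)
Each of the three items is an instance of \Cref{structural_theorem}. Complete $r$-graphs lie in $\mathcal{Y}_r$, so the theorem applies with any tuple $(F_1,\dots,F_s)$ and any tuple of cliques $(Q_1,\dots,Q_t)$. By the theorem, an $r$-graph $G$ with $G\to(F_1,\dots,F_s)$ and $G\not\to(Q_1,\dots,Q_t)$ exists if and only if there is \emph{no} partition $A_1\dot\cup\dots\dot\cup A_s$ of $[t]$ with $F_i\to(Q_j)_{j\in A_i}$ for all $i$. For each item I will enumerate the finitely many partitions and show that each one fails for some $i$. Three elementary facts do the work:
\begin{itemize}
\item[(1)] $K_m\to()$ is always false, since an empty part cannot be satisfied.
\item[(2)] $K_m\to(K_a)$ holds if and only if $m\geq a$.
\item[(3)] $K_m\to(K_a,K_b)$ holds if and only if $m\geq R(K_a,K_b)$, and the same for longer tuples.
\end{itemize}
Here I write $K_m$ for $K^{(r)}_m$.

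For (i), take $s=t=2$ with $F=(K_q,K_\ell)$ and $Q=(K_k,K_k)$. A partition either gives both indices to one $F_i$, leaving the other part empty and failing by (1), or assigns one $K_k$ to each $F_i$. In the latter case $K_\ell\to(K_k)$ would be needed, which fails because $\ell<k$. No partition works, so $G$ exists.

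For (ii), take $F=(K_{k-1},K_{k-1},K_\ell)$ and $Q=(K_k,K_\ell)$. Since $t=2<3=s$, some $A_i$ is empty and that $F_i$ fails by (1). So every partition fails.

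For (iii), take $F=(K_{k+1},K_{k-1},K_\ell)$ and $Q=(K_k,K_k,K_\ell)$. With $s=t=3$, any partition with an empty part fails by (1), so only bijections need checking. Each copy of $K_k$ must go to an $F_i$ with at least $k$ vertices, and the only such $F_i$ is $K_{k+1}$. Two copies of $K_k$ cannot both be sent there under a bijection, so every bijection fails.

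These verifications are short, and the only step that needs care is confirming that the hypotheses of \Cref{structural_theorem} hold. The cliques are in $\mathcal{Y}_r$, and $\ell\geq r$ ensures each clique has at least $r$ vertices, hence at least one edge, so that (2) is meaningful. Note also that (1) requires no assumption: an empty tuple is never arrowed, so an empty part automatically fails.
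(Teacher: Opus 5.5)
Your overall strategy matches the paper's: cliques lie in $\mathcal{Y}_r$, so you apply \Cref{structural_theorem} and rule out every partition. However, fact (1) is false, and most of your case analysis rests on it. When $A_i=\emptyset$, the condition $F_i\to(Q_j)_{j\in A_i}$ quantifies over colorings of $E(F_i)$ that use no colors. Since $K_m$ (in your shorthand $K_m=K^{(r)}_m$) has an edge for $m\geq r$, no such coloring exists, so $K_m\to()$ holds \emph{vacuously}. This is the reading under which \Cref{structural_theorem} is proved. Its proof converts $F_{i(\mathcal{A})}\not\to(Q_j)_{j\in A_{i(\mathcal{A})}}$ into an edge-partition of $F_{i(\mathcal{A})}$ supported on $A_{i(\mathcal{A})}$, and the proof of \Cref{Ramsey_equivalent} has to argue separately that the parts are non-empty. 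Under your reading the theorem is false. For $s>t$ every partition has an empty part, so your argument for (ii) applies verbatim to $(K_k,K_k,K_k)$ versus $(K_k,K_\ell)$ and would give $\mathcal{R}(K_k,K_k,K_k)\not\subseteq\mathcal{R}(K_k,K_\ell)$. That is absurd: view a $2$-coloring as a $3$-coloring with one unused color. Similarly, your argument for (i) never uses $q=R(K_k,K_k)-1$, yet for $q\geq R(K_k,K_k)$ item (i) is false. Coloring all of $G$ in the first color shows that $G\to(K_q,K_\ell)$ forces $K_q\subseteq G$, hence $G\to(K_k,K_k)$.

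To repair the proof, you must rule out partitions with empty parts like all the others.
\begin{enumerate}[(i)]
\item For $(A_1,A_2)=([2],\emptyset)$ the failing index is $1$, precisely because $q<R(K_k,K_k)$. If $A_2\neq\emptyset$, then $K_\ell\not\to(Q_j)_{j\in A_2}$ since $\ell<k$. This is the paper's proof.
\item For this item and the next, use the following observation. If $j\in A_i$ and $v(F_i)<v(Q_j)$, then coloring all of $F_i$ with color $j$ witnesses $F_i\not\to(Q_{j'})_{j'\in A_i}$, because every $Q_{j'}$ has an edge. Here it kills every partition at the part containing the index of $K_k$.
\item The observation forces both copies of $K_k$ into $A_1$. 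That leaves $(\{1,2\},\{3\},\emptyset)$, $(\{1,2\},\emptyset,\{3\})$ and $(\{1,2,3\},\emptyset,\emptyset)$, which you never examined. In each of these, $F_2$ and $F_3$ satisfy their conditions (the empty ones vacuously). So all three fail only at $F_1$, which requires $K_{k+1}\not\to(K_k,K_k)$, i.e.\ $R(K_k,K_k)>k+1$. Your proposal never establishes this. It does hold for $k\geq r+1$: fix vertices $u\neq v$ of $K_{k+1}$ and color an edge red if and only if it contains exactly one of $u,v$. Every $k$-subset of the $k+1$ vertices then contains edges of both colors, which also rules out the third partition.
\end{enumerate}
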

\begin{proof}[Proof of~\Cref{asymmetric_Ramsey}]
As items~\eqref{(i)},~\eqref{(ii)}, and~\eqref{(iii)} all follow directly from~\Cref{structural_theorem} upon verifying the required conditions, we prove only item~\eqref{(i)} here for simplicity.

Let $F_1=K^{(r)}_q$, $F_2=K^{(r)}_\ell$, and $Q_1=Q_2=K^{(r)}_k$, where $q=R(K^{(r)}_k,K^{(r)}_k)-1$. Fix an arbitrary partition $A_1\dot\cup A_2=[2]$. Since $Q_1,Q_2\in\mathcal{Y}_r$, to conclude that $\mathcal{R}(F_1,F_2)\not\subseteq\mathcal{R}(Q_1,Q_2)$, by~\Cref{structural_theorem} it suffices to show that there is some $i\in[2]$ such that $F_i\not\to(Q_j)_{j\in A_i}$. If $A_2\neq\emptyset$, then from $k>\ell$ it follows that $F_2\not\to(Q_j)_{j\in A_2}$. If $A_2=\emptyset$, then $A_1=[2]$. Since $q<R(K^{(r)}_k,K^{(r)}_k)=R(Q_1,Q_2)$, we have that $F_1\not\to(Q_j)_{j\in A_1}$.
\end{proof}

We remark that there are alternative proofs of item~\eqref{(i)} and of most cases of item~\eqref{(ii)} without recourse to~\Cref{structural_theorem}. Indeed, item~\eqref{(i)} was recently proved by Sviridenkov~\cite{sviridenkov2026} by considering the $r$-uniform shadow of a random hypergraph of higher uniformity, while item~\eqref{(ii)} under an additional assumption $\ell>r^2-r$ can be obtained by utilizing multiple results on Ramsey properties of random hypergraphs from~\cite{bowtell2023,conlon2016,friedgut2010}. For item~\eqref{(iii)}, however, no alternative proof is known to us.

\vspace{1em}
\textbf{Acknowledgement.} The author is grateful to Maria Axenovich for many fruitful discussions and for bringing the reference~\cite{graham2002} to his attention.

\end{document}